\documentclass[11pt, letterpaper]{article}
\usepackage[utf8]{inputenc}
\usepackage[letterpaper, margin=1in]{geometry}
\usepackage[T1]{fontenc}
\usepackage{hyperref}

\usepackage{amsmath,amsthm,amssymb}
\usepackage[capitalise, noabbrev]{cleveref}
\usepackage[mathscr]{eucal}
\usepackage[left,mathlines]{lineno}
\usepackage{lmodern}
\usepackage{fontawesome5}
\usepackage{microtype}
\usepackage{textcomp}
\usepackage{thm-restate}
\theoremstyle{plain}
\newtheorem{theorem}{Theorem}
\newtheorem{lemma}[theorem]{Lemma}
\newtheorem{corollary}[theorem]{Corollary}

\newtheorem{example}[theorem]{Example}

\newtheorem{observation}[theorem]{Observation}
\newtheorem{claim}[theorem]{Claim}
\newenvironment{claimproof}[1][\proofname]{\begin{proof}[Claim Proof]}{\end{proof}}

\newcommand{\aemail}[1]{\href{mailto:#1}{\faIcon[regular]{envelope}~\nolinkurl{#1}}}
\newcommand{\aorcid}[1]{\href{https://orcid.org/#1}{\faOrcid}}
\newcommand{\ahome}[1]{\href{#1}{\faHome}}
	
\title{Arboricity Nearly Bounds Degeneracy}

\author{
Michał Lasoń \thanks{Institute of Mathematics of the Polish Academy of Sciences, ul.\ Śniadeckich 8, 00-656 Warszawa, Poland, 
\aemail{michalason@gmail.com}~\aorcid{0000-0003-4830-2270},
Supported by grant no.~2019/34/E/ST1/00087 from National Science Centre, Poland.}
\and
Bartłomiej Bosek \thanks{Institute of Theoretical Computer Science, Faculty of Mathematics and Computer Science, Jagiellonian University, Kraków, Poland,
\aemail{bartlomiej.bosek@uj.edu.pl}~\aorcid{0000-0001-8756-3663},
Partially supported by grant no.~2019/35/B/ST6/02472 from National Science Centre, Poland.}
\and
Grzegorz Gutowski \thanks{Institute of Theoretical Computer Science, Faculty of Mathematics and Computer Science, Jagiellonian University, Kraków, Poland,
\aemail{grzegorz.gutowski@uj.edu.pl}~\ahome{https://grzegorz.gutowscy.pl}~\aorcid{0000-0003-3313-1237},
Partially supported by grant no.~2023/49/B/ST6/01738 from National Science Centre, Poland.}
\and
Jakub Przybyło \thanks{AGH University of Krakow, Faculty of Applied Mathematics, al.\ A.\ Mickiewicza 30, 30-059 Kraków, Poland, 
\aemail{jakub.przybylo@agh.edu.pl}~\aorcid{0000-0002-1262-7017},
Partially supported by the AGH University of Krakow under grant no. 16.16.420.054 and Excellence Initiative – Research University AGH University of Krakow, funded by the Polish
Ministry of Science and Higher Education.}
}
\date{}

\usepackage[disableredefinitions]{complexity}
\usepackage{nicefrac}
\usepackage{tcolorbox}
\usepackage{subcaption}
\usepackage{tikz}
\usetikzlibrary{arrows,calc,math,patterns,shapes,fit,decorations.pathreplacing,shapes.geometric}
\usepackage{todonotes}
\usepackage{xcolor}
\usepackage{xspace}

\definecolor{defblue}{rgb}{0.121,0.47,0.705}
\definecolor{linkblue}{rgb}{0.098,0.098,0.4392}
\definecolor{newred}{rgb}{0.9,0.3,0.3}
\let\emph\relax
\DeclareTextFontCommand{\emph}{\color{defblue}\em}

\newcommand{\defproblem}[4]{
  \begin{tcolorbox}%
    \hspace{0ex}\hspace*{-2.8ex}
    \begin{minipage}{0.99\textwidth}
      \vspace{0ex}\vspace*{-1ex}
      \begin{tabular}{@{}l@{~~}p{0.9\textwidth}@{}}
        {\sf\bfseries\color{gray} Problem:} & #1\\[.1ex]
        {\sf\bfseries\color{gray} Input:} & #2\\[.1ex]
        {\sf\bfseries\color{gray} #4:} & #3\\[-1ex]
      \end{tabular}
    \end{minipage}
  \end{tcolorbox}
}

\newcommand{\defdecproblem}[3]{\defproblem{#1}{#2}{#3}{Question}}
\newcommand{\defoptproblem}[3]{\defproblem{#1}{#2}{#3}{Output}}
\newcommand{\OPdecomposition}{\textsc{Decomposition}\xspace}

\newcommand{\DPpositive}{\textsc{PositiveObject}\xspace}
\newcommand{\DPdec}[2]{\textsc{$(#1,#2)$\nobreakdash-Decomposition}\xspace}
\newcommand{\DPham}{\textsc{HamiltonianCycle}\xspace}
\newcommand{\DPfixham}[1]{\textsc{HamiltonianCycleFixedEdge[$#1$-Regular]}\xspace}

\newcommand{\yes}{\textsc{Yes}}

\let\geq\geqslant
\let\le\leqslant
\let\ge\geqslant

\let\rho\varrho

\newcommand{\brac}[1]{{\left(#1\right)}}
\newcommand{\bbrac}[1]{{\Big(#1\Big)}}

\newcommand{\set}[1]{\left\{#1\right\}}

\newcommand{\norm}[1]{{\left|#1\right|}}

\newcommand{\Oh}[1]{O\brac{#1}}
\newcommand{\oh}[1]{o\brac{#1}}

\DeclareMathOperator{\score}{score}
\DeclareMathOperator{\rank}{rank}

\DeclareMathOperator{\redc}{r}

\DeclareMathOperator{\pot}{f}
\DeclareMathOperator{\Pot}{f}

\begin{document}

\maketitle

\begin{abstract}

Arboricity and degeneracy are two fundamental and closely related graph parameters that measure the sparsity of a graph. Every $k$\nobreakdash-degenerate graph is $k$\nobreakdash-arboric, but some $k$\nobreakdash-arboric graphs are only $(2k-1)$\nobreakdash-degenerate.
However, every maximal $k$\nobreakdash-arboric multigraph with $n$ vertices and every maximal $k$\nobreakdash-degenerate multigraph with $n$ vertices has exactly $k(n-1)$ edges.
These basic observations lead to a natural structural question: 
\begin{center}
\textit{How far are $k$\nobreakdash-arboric graphs from being $k$\nobreakdash-degenerate?}
\end{center}
We answer this question by showing that:
\begin{center}
\textit{By at most a $(k-1)$\nobreakdash-bounded-degree graph apart.}
\end{center}

More specifically, we prove that a $k$\nobreakdash-arboric multigraph admits a $(k,k-1)$\nobreakdash-decomposition, that is, its edges can be partitioned into two multisets such that one spans a $k$\nobreakdash-degenerate multigraph and the other spans a multigraph with every vertex having degree at most $k-1$.

Moreover, we provide a complete characterisation of all possible such decomposition types. Namely, for any integers $k \ge 1$ and $d,h \ge 0$ we show that every $k$\nobreakdash-arboric multigraph admits a $(d,h)$\nobreakdash-decomposition if and only if 
	$d\geq k$ and $d+h\geq 2k-1$. 

Our proofs are constructive and we present a polynomial time algorithm that produces such decompositions.
By contrast, we show that related decision problems for general graphs (without constraints on the arboricity) are \NP-complete.
\end{abstract}

\section{Introduction}

Throughout this paper, by a \emph{graph} we mean a finite loopless multigraph. For simplicity, 
we use the term set of edges instead of multiset of edges, although edge multiplicities are always taken into account.
A graph is \emph{$k$\nobreakdash-degenerate} if every induced subgraph contains a vertex of degree at most $k$.
Equivalently, a graph is $k$\nobreakdash-degenerate if the set of vertices admits a linear order in which every vertex has at most $k$ edges to neighbors that are earlier in the order.
A graph is \emph{$k$\nobreakdash-arboric} if the set of edges admits a partition into $k$ forests.
Equivalently, the celebrated Nash-Williams Theorem~\cite{NashW1961,NashW1964} (see also \cite{Seymour1998,Lason2015} for interesting matroid generalisations) states that a graph is $k$\nobreakdash-arboric if every subset of $\ell$ vertices spans at most $k(\ell-1)$ edges.
Notice that a graph is $1$\nobreakdash-arboric if and only if it is $1$\nobreakdash-degenerate if and only if it is a forest.
Both $k$-degenerate and $k$-arboric graphs can have vertices with arbitrarily high degree.
A graph is \emph{$k$\nobreakdash-bounded-degree} if every vertex has degree at most $k$.

It is a basic observation that every $k$\nobreakdash-degenerate graph is $k$\nobreakdash-arboric.
To see that, one can process the vertices in the order guaranteed by the degeneracy condition -- for each vertex, color the edges to neighbors that are earlier in the order (there are at most $k$ such edges) using different colors from the set $\{1,\dots,k\}$.
After processing all the vertices, edges in every single color span a $1$\nobreakdash-degenerate graph (a forest), and as a result we obtain a partition of the edge set into $k$ forests.

However, a $k$-arboric graph is not necessarily $k$-degenerate.
It is $k$-degenerate if and only if its edge set admits a partition into $k$ graphs that are $1$\nobreakdash-degenerate (forests) with the same linear order. 
But, as there are at most $k(n-1)$ edges in a $k$-arboric graph on $n$ vertices, there is always a vertex with degree less than $2k$.
Thus, we obtain that every $k$\nobreakdash-arboric graph is $(2k-1)$\nobreakdash-degenerate.
Moreover, the complete graph $K_{2k}$ shows that this bound is tight.
For another illustration of this difference, notice that planar simple graphs are $3$-arboric and $5$-degenerate, and these bounds are also tight.
On the other hand, every maximal $k$\nobreakdash-arboric graph as well as every maximal $k$\nobreakdash-degenerate graph on $n$ vertices has exactly $k(n-1)$ edges.

In general, degeneracy is more often used as a basis for various structural arguments or effective algorithms, than arboricity is.
In particular, it is often the case that arguments for $k$-arboric graphs simply trade a multiplicative factor of $2$ and reduce to the case of $(2k-1)$\nobreakdash-degenerate graphs.
These theoretical disparities as well as application aspects lead to a natural open-ended question: 

\begin{center}
\textit{How far are $k$\nobreakdash-arboric graphs from being $k$\nobreakdash-degenerate?}
\end{center}
Specifically, what are the strongest constraints that one can impose on the edges removed from a $k$\nobreakdash-arboric graph in order to get a $k$\nobreakdash-degenerate graph?
We answer this question by showing that:
\begin{center}
\textit{Every $k$-arboric graph is $k$-degenerate up to removing edges of a $(k-1)$\nobreakdash-bounded-degree subgraph.}
\end{center}
As every $k$-arboric graph is $(2k-1)$-degenerate up to removing edges of a $0$\nobreakdash-bounded-degree subgraph, we show also that there is a gradual change between these two extremal cases:
\begin{center}
\textit{Every $k$-arboric graph is $(k+\ell)$-degenerate up to removing a $(k-\ell-1)$\nobreakdash-bounded-degree subgraph.}
\end{center}

These results are best possible, as shown in the next subsection.
To the best of our knowledge, these are first decomposition results of this kind.
Analogous decompositions were previously known only in restricted planar settings -- for triangle-free planar simple graphs (i.e., when $k=2$)~\cite{XuZ2025} and for planar simple graphs (i.e., when $k=3$)~\cite{ChoCKPSZ2022}.

Our proof is a complex inductive reasoning for which we extend the family of considered combinatorial objects and prove a more general statement.
The arguments are constructive and we present a polynomial time algorithm that produces desired decompositions.
We also show that related decision problems for general graphs (without constraints on the arboricity) are \NP-complete. We conjecture that there are sharp complexity thresholds for these problems as soon as we drop the constraints on the arboricity.

\subsection{Results}

A \emph{$(d,h)$\nobreakdash-decomposition} of a graph $G=(V,E)$ is a partition of the edge set $E$ of~$G$ into two subsets $E=E_1 \sqcup  E_2$ such that the graph $G_1=(V,E_1)$ is $d$\nobreakdash-degenerate and the graph $G_2=(V,E_2)$ is $h$\nobreakdash-bounded-degree. We call this a decomposition of \emph{type $(d,h)$}. The main result of the paper is a complete  characterisation of all possible decomposition types that exist for all $k$\nobreakdash-arboric multigraphs.

\begin{restatable}{theorem}{thmsuper}\label{thm:super}
For any integers $k \ge 1$, $d,h \ge 0$ inequalities $d\geq k$ and $d+h\geq 2k-1$ are necessary and sufficient conditions for existence of a $(d,h)$\nobreakdash-decomposition for every $k$\nobreakdash-arboric multigraph. \newline
In particular, every $k$\nobreakdash-arboric multigraph admits a $(k+\ell,k-\ell-1)$\nobreakdash-decomposition for $k-1\ge \ell \ge 0$.
\end{restatable}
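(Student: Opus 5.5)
Two parts are needed: necessity (constructions) and sufficiency. Since a $d'$-degenerate graph is $d$-degenerate for $d\ge d'$, and an $h'$-bounded-degree graph is $h$-bounded-degree for $h\ge h'$, sufficiency reduces to the boundary cases $(k+\ell,k-\ell-1)$ for $0\le\ell\le k-1$. The case $\ell=k-1$ is the trivial $(2k-1,0)$-decomposition observed in the introduction. Every pair with $d\ge k$ and $d+h\ge 2k-1$ dominates one of these cases.

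\textbf{Necessity.} First take $d<k$. I would use a $k$-arboric multigraph in which every vertex has large degree and which is ``tight'', for instance $K_2$ with $k$ parallel edges blown up appropriately. A cleaner candidate is a large $k$-regular-ish tight graph, such as the union of $k$ edge-disjoint spanning trees on $n$ vertices, each of which contains many high-degree vertices, with $k(n-1)$ edges. A $(d,h)$-decomposition has at most $d(n-1)+hn/2$ edges. To beat this I need a sparser-looking obstruction. Instead I would take a graph where every vertex has degree at least $kD$ locally, for example an iterated construction in which every subgraph of the degenerate part must keep minimum degree above $d$ after removing $h$ edges per vertex.

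Next take $d\ge k$ and $d+h\le 2k-2$. The natural witness is $K_{2k}$, or a large $k$-arboric graph in which every vertex has degree $2k-1$ or more. In $K_{2k}$, removing an $h$-bounded subgraph leaves minimum degree at least $2k-1-h\ge d+1$, so the remainder is not $d$-degenerate. For $d<k$ the witness must be a $k$-arboric graph all of whose subgraphs remain dense after removing an $h$-bounded set for arbitrarily large $h$. I would use a high-multiplicity construction: two vertices joined by $k$ parallel edges, grown into a tree-like structure with $k$ parallel edges per tree edge. Then every leaf has degree $k$, and a counting argument over leaves, using that leaf edges beyond $h$ must go to $E_1$, forces a vertex of $E_1$-degree above $d$ in every subgraph. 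I expect to tune this with a deep rooted tree whose internal vertices have huge branching.

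\textbf{Sufficiency.} I would prove a strengthened, inductive statement. Fix forests $F_1,\dots,F_k$ partitioning $E$. Build the degeneracy order by repeatedly deleting a vertex $v$ of small degree, where the edges at $v$ that exceed the budget $k+\ell$ are charged to $E_2$. Each vertex may receive at most $k-\ell-1$ charged edges in total, counting both edges charged when it is deleted and edges charged when its neighbours are deleted. To make the induction work, I would generalise to graphs with a precharge function $c\colon V\to\{0,\dots,k-\ell-1\}$ recording already-used $E_2$-capacity. The potential-type invariant would be that every vertex set $S$ spans at most $k(|S|-1)$ edges, adjusted by the used capacity. Each step then picks a vertex with few remaining edges, or one of whose forest neighbours are leaves. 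The fact that each forest has a leaf gives such a vertex, with degree at most $2k-1$ and with structure among its edges.

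The main obstacle will be choosing this generalised object and the invariant so that deletion of some vertex always preserves it. One has to pick carefully which $\le k-\ell-1$ edges go to $E_2$: they should go toward neighbours with spare capacity, which requires a Hall-type or augmenting-path argument over the forests. This augmenting step is also what would yield the polynomial algorithm.
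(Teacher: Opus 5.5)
\textbf{The sufficiency half has a genuine gap.} You sketch an induction with vertex deletions, a precharge $c$, an unspecified ``Nash-Williams count adjusted by used capacity'', and a Hall/augmenting step. You then explicitly leave open the only thing that would make it a proof: an invariant that holds for $k$-arboric graphs, is preserved by some step, and under which a legal step always exists. That is the entire content of \cref{thm:main}. Finding a vertex of degree at most $2k-1$ is not the issue, since that only yields $(2k-1)$-degeneracy. The real issue is where the $\deg(v)-k$ surplus edges can be charged so that no vertex ever exceeds its red budget over the whole run.

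The paper shows why density bookkeeping alone cannot carry such an induction. The relaxed condition that its charging steps do preserve is $\norm{E[U]}+\redc(U)\le k\norm{U}-1$ for all nonempty $U$. This condition holds for the non-decomposable graph of \cref{exm:positive_no_decomposition}, and \cref{exm:obstruction_no_reduction} gives positive objects admitting no reduction at all. What the proof actually needs is the following:
(i) a second, non-density hypothesis, ``unobstructed'': every subobject has total score $\sum_v\max(0,2k-\deg(v)-\redc(v))\ge k+\frac12\Pot$.
(ii) Steps that delete a vertex only at degree at most $k$, and otherwise remove a single red edge $vw$ while incrementing the red-count of $w$ only. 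Here $w$ must be a safe neighbour: every proper vertex set of potential $2$ containing $w$ also contains $v$. There is also a Type~3 step on an edge $v_1v_2$ that charges nobody.
(iii) The accounting of \cref{lem:main}: always preferring Type~1 steps bounds the final red degree of a vertex first used as a base of a Type~2 or~3 step in $(H_i,\redc_i)$ by $\redc_i(v)-\redc_1(v)+\deg_{H_i}(v)-k\le k-1-\redc_1(v)$.
(iv) The inductive bound $\rank(G,\redc)\ge k+\frac12\Pot(G,\redc)$ of \cref{lem:friendly_high_rank}, proved via maximal dangerous sets and \cref{lem:safe_stabs}. This is what guarantees that a step always exists.
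Your proposal has none of these and no working substitute for them.

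\textbf{Two smaller points.} Your $K_{2k}$ argument for $d+h\le 2k-2$ is correct and is the paper's. For $d<k$, however, you never commit to a witness. Your multiplicity-$k$ tree idea already works at depth one. The star with $h+1$ leaves and every edge of multiplicity $k$ is $k$-arboric. Its centre has at most $h$ edges in $E_2$, so some leaf keeps all $k$ parallel edges in $E_1$. Those two vertices induce a subgraph of $G_1$ of minimum degree $k$, so $G_1$ is not $(k-1)$-degenerate. The paper instead uses the simple graph $K_{k,kh+h+k}$, arguing about the last vertex of the small side in $\ll$, so that the characterisation also covers simple graphs.

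Second, you do not need to build every $(k+\ell,k-\ell-1)$ into the induction, and this is where a Hall argument genuinely belongs. \cref{lem:super} turns any $(d,h)$-decomposition into a $(d+1,h-1)$-decomposition:
\begin{itemize}
\item call $v$ heavy if its $h$ red edges all go forward in $\ll$; heavy vertices are pairwise non-adjacent in red;
\item take a matching saturating the heavy vertices, which exists by Hall, and recolour it blue;
\item let every other unmatched vertex with $h$ red edges recolour one backward red edge blue.
\end{itemize}
Then every vertex gains at most one backward blue edge, and every vertex that had $h$ red edges loses one. Hence only the case $(k,k-1)$ has to be proved directly.
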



\cref{thm:super} consists of two implications -- easy (inequalities are necessary for a $(d,h)$\nobreakdash-decomposition) and hard (inequalities are sufficient). At the end of this subsection we give two examples that justify the easy implication. The fundamental case for the hard implication is the one for $(d,h)=(k,k-1)$. Thus, we state this special case as a separate theorem and most of the paper is devoted to its proof.

\begin{restatable}{theorem}{thmmain}\label{thm:main}
For every integer $k\ge1$, a $k$\nobreakdash-arboric multigraph admits a $(k,k-1)$\nobreakdash-decomposition.
\end{restatable}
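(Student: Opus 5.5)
The plan is to prove a stronger statement by induction on the number of vertices, since the plain statement is not inductive. Removing a low-degree vertex $v$ forces up to $\deg(v)-k$ of its edges into $E_2$, and this uses up $E_2$-capacity at the neighbours of $v$. I would therefore work with \emph{capacitated} graphs: pairs $(G,b)$ with $b\colon V\to\{0,\dots,k-1\}$. The goal is a partition $E=E_1\sqcup E_2$ with $(V,E_1)$ $k$-degenerate and $\deg_{E_2}(v)\le b(v)$ for every $v$. The hypothesis will be a Nash-Williams-type sparsity condition adjusted by capacities. I would first guess something of the form
\[
|E(S)| \le k(|S|-1) - \sum_{v\in S}\phi\bigl(k-1-b(v)\bigr) \quad\text{for all } \emptyset\neq S\subseteq V,
\]
for a suitable penalty $\phi$, possibly with a special role for one distinguished vertex. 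With $b\equiv k-1$ this reduces to $k$-arboricity via the Nash-Williams Theorem, so \cref{thm:main} follows. The exact form of $\phi$ should be calibrated on small tight examples such as $K_{2k}$, where $E_2$ must be nearly a $(k-1)$-regular subgraph.

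The inductive step would peel a vertex $v$ that is placed \emph{first} in the degeneracy order. Such a $v$ keeps at most $k$ edges in $E_1$, and its remaining $\deg(v)-k$ edges go to $E_2$. This needs $\deg(v)-k\le b(v)$, and each neighbour $u$ receiving an $E_2$-edge drops from $b(u)$ to $b(u)-1$. I would then verify that $G-v$ with the decreased capacities still satisfies the invariant. A counting argument starting from $|E|\le k(n-1)$ gives a vertex of degree at most $2k-1$, and its weighted analogue should produce a vertex $v$ with $\deg(v)\le k+b(v)$. The real freedom is in choosing \emph{which} $\deg(v)-k$ edges at $v$ go to $E_2$: we want to hit neighbours whose capacity is not needed in tight sets. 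I would phrase this choice as a bipartite matching / Hall condition between the edges at $v$ and the neighbours with slack. I would also use submodularity of $S\mapsto k(|S|-1)-|E(S)|$ to show that tight sets containing $v$'s neighbours can be uncrossed into a single maximal tight set.

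I expect the main obstacle to be the case where every low-degree vertex sits inside tight sets, so that any choice of $E_2$-edges destroys the invariant for some $S$. My plan there is a minimal-counterexample argument. First, take a minimal tight set $T$. If it is proper, apply induction separately to $G[T]$ and to $G/T$ (contract $T$ to a single vertex with appropriately adjusted capacity), then glue the two decompositions: the degeneracy orders concatenate, and the $E_2$-degrees add, with the capacity split making this consistent. Second, if no proper tight set exists, there is global slack, and the peeling step above goes through. If the linear invariant proves too rigid, I would fall back on starting from $k$ rooted spanning forests, orienting each edge toward the parent so that every vertex has out-degree at most $k$. I would then try to break the directed cycles of this orientation by rerouting edges into $E_2$ along alternating paths, keeping $E_2$-degrees at most $k-1$, and use the forest structure to bound how often each vertex is touched. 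Both routes are constructive and yield polynomial-time algorithms.
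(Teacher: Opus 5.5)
\textbf{There is a genuine gap.} The inductive invariant is never fixed, and the template you write down cannot be it. For $S=\set{v}$ your inequality reads $0\le-\phi(k-1-b(v))$. So any penalty with $\phi(x)>0$ for $x>0$ forbids every reduced capacity, and the invariant breaks at the first peel of a vertex of degree larger than $k$, which is unavoidable, e.g.\ in $K_{2k}$. A distinguished vertex does not help, since several neighbours get charged.

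To leave room you must weaken the constant, and the natural weakening is the paper's: with $\redc=k-1-b$, require $\pot_{G,\redc}(U)=2k\norm{U}-2\norm{E[U]}-2\redc(U)\ge2$ for all nonempty $U$. But this linear condition alone is not a valid induction hypothesis. The paper's example ($k+1$ pairs $x_iy_i$, each joined by $2k-1$ parallel edges, plus two stars centred at $x_0$ and $y_0$) satisfies it with $\redc\equiv0$ and does not decompose. Two things are missing:
\begin{itemize}
\item the second, nonlinear condition (unobstructed: every subobject has total score $\sum_v\max(0,2k-\deg(v)-\redc(v))$ at least $k$ plus half its potential);
\item above all, the quantitative lemma that lets the induction close: $\rank(G,\redc)\ge k+\frac12\Pot(G,\redc)$ for every nonempty positive unobstructed object.
\end{itemize}
Mere existence of a reduction does not survive the induction over dangerous (potential-$2$) sets; the rank bound does. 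Each edge leaving one of the disjoint maximal dangerous sets $A_i$ costs at most one unit of rank, and the count over $A$, $B$, $C$, $D$ recovers it.

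\textbf{Peeling step.} Removing $v$ with all $\deg(v)-k$ red edges at once is not justified by ``no proper tight set''. That gives only one unit of slack per set, and two charges landing in one nearly tight set break it. The paper instead does the following:
\begin{itemize}
\item it colours one edge at a time, only towards a safe neighbour $w$ (every dangerous set containing $w$ also contains $v$), and re-evaluates after each step;
\item when no safe neighbour exists, it colours red an edge between two positive-score vertices of degree larger than $k$, \emph{charging neither endpoint}.
\end{itemize}
That last move is sound only because the paper's red-count is deliberately not the true number of red edges. The final bound $\deg_{G_2}(v)\le\redc_i(v)-\redc_1(v)+\deg_{H_i}(v)-k\le k-1-\redc_1(v)$ comes from always preferring Type~1 reductions. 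Your exact-capacity bookkeeping, which decrements $b$ at the surviving endpoint of every red edge, has no analogue of this move.

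\textbf{Contraction step.} This does not glue either. After uncontracting $T$, the up to $k$ edges of $E_1$ that count against the contracted vertex in the order may all end at a single vertex of $T$. If that vertex already has $k$ such edges inside $G[T]$, the concatenated order is not $k$-degenerate. Moreover, distributing the contracted vertex's $E_2$-edges into $T$ means lowering capacities inside a tight set, which is precisely what a tight set cannot absorb.

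\textbf{Orientation fallback.} This merely restates the problem: a graph is $k$-degenerate iff it has an acyclic orientation with out-degrees at most $k$. No mechanism for breaking the cycles is given.
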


A proof of \cref{thm:main} is given in \cref{sec:main}. Next, in \cref{sec:super}, we show the following lemma which allows to gradually change a $(k,k-1)$\nobreakdash-decomposition and obtain 
all $(k+\ell,k-\ell-1)$\nobreakdash-decompositions.

\begin{restatable}{lemma}{lemsuper}\label{lem:super}
For any integers $d \geq 0$, $h \geq 1$, and a $(d,h)$\nobreakdash-decomposition of a multigraph, there is a $(d+1,h-1)$\nobreakdash-decomposition of the same multigraph.
\end{restatable}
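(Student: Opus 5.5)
We keep the degeneracy order of $G_1=(V,E_1)$ unchanged, and move a carefully chosen set $M\subseteq E_2$ into the degenerate part. Concretely, the goal is a set $M$ such that in the fixed order every vertex has at most one $M$\nobreakdash-edge to an earlier vertex, and every vertex of $E_2$\nobreakdash-degree exactly $h$ is incident to some edge of $M$. Then $(E_1\cup M,\,E_2\setminus M)$ is the required $(d+1,h-1)$\nobreakdash-decomposition. Indeed, each vertex has at most $d$ earlier $E_1$\nobreakdash-neighbours and at most one earlier $M$\nobreakdash-neighbour, so $E_1\cup M$ is $(d+1)$\nobreakdash-degenerate. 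Also every vertex of $E_2$\nobreakdash-degree $h$ loses at least one edge, and all other vertices already have degree at most $h-1$.

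Fix a linear order $\sigma$ witnessing that $G_1$ is $d$\nobreakdash-degenerate, and let $S$ be the set of vertices of degree exactly $h$ in $G_2=(V,E_2)$. We build $M$ by letting each vertex \emph{own} at most one edge of $E_2$ leading to an earlier vertex; $M$ is the set of owned edges. Any vertex that owns an edge is covered by it, and so is the earlier endpoint of that edge.

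Split $S$ into two parts. The first consists of vertices with at least one earlier $E_2$\nobreakdash-neighbour; these will be covered by owning some such edge. The second part is
\[
T=\{v\in S : \text{all $E_2$-edges at } v \text{ go to later vertices}\},
\]
and these vertices must be covered by a later neighbour owning the connecting edge. Since each vertex owns at most one edge, we need a matching in the bipartite multigraph $B$ whose sides are $T$ and $V$, with an edge $tu$ for each $E_2$\nobreakdash-edge from $t\in T$ to a later vertex $u$. This matching must saturate $T$.

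The main step is Hall's condition for $B$. Take $X\subseteq T$. Every edge of $G_2$ at a vertex of $X$ goes forward, so $X$ sends exactly $h|X|$ edges into $B$. On the other side each vertex has $G_2$\nobreakdash-degree at most $h$, so these edges reach at least $h|X|/h=|X|$ distinct vertices, using $h\ge 1$. Hence $|N_B(X)|\ge|X|$, and Hall's theorem gives a matching saturating $T$.

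A matched partner $u$ of $t\in T$ has $t$ as an earlier neighbour, so $u\notin T$. Hence no vertex is asked to play two roles through $T$. Each matched $u$ owns the edge $ut$. Every vertex of $S\setminus T$ that is not matched owns an arbitrary edge to an earlier neighbour. Each vertex owns at most one edge, always pointing backwards in $\sigma$, and every vertex of $S$ is covered. This gives the required $M$.

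I expect the Hall-condition count to be the only nontrivial point. It uses precisely that the vertices of $T$ have the maximum degree $h$ and that $h\ge1$; the rest is bookkeeping.
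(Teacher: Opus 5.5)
Your proposal is correct and follows the paper's proof essentially step for step: your set $T$ is the paper's set of heavy vertices, and you apply Hall's theorem to the same bipartite graph of red edges at $T$ with the same degree count. The leftover unmatched vertices of red degree $h$ then each pick one backward red edge, exactly as in the paper, and your two invariants (at most one moved edge to an earlier vertex per vertex, and every vertex of red degree $h$ covered) are precisely the two conditions the paper's violet edges satisfy.
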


Our proofs are constructive and produce aforementioned decompositions in polynomial time. In \cref{sec:algorithm} we give an effective implementation and prove the following lemma.
\smallskip

\begin{restatable}{lemma}{lemalgorithm}\label{lem:algorithm}
For integers $k \ge 1$, $d,h \ge 0$ satisfying $d\geq k$ and $d+h\geq 2k-1$ there is an algorithm that constructs a $(d,h)$\nobreakdash-decomposition of a $k$\nobreakdash-arboric multigraph $G$ with $m$ edges in time $\Oh{m^3}$.
\end{restatable}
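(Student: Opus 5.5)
We must prove \cref{lem:algorithm}, which asks for an $\Oh{m^3}$ algorithm producing a $(d,h)$-decomposition whenever $d\ge k$ and $d+h\ge 2k-1$. The plan is to reduce everything to the fundamental $(k,k-1)$ case and then apply the constructive shifting of \cref{lem:super}, accounting for running time at each stage.

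\textbf{Reduction.} First reduce to the two boundary cases. If $d\ge 2k-1$, the graph is already $(2k-1)$-degenerate, as observed in the introduction. We can then output $E_1=E$ and $E_2=\emptyset$, computing a degeneracy order by repeatedly removing a minimum-degree vertex in $\Oh{m}$ time with bucket queues. Otherwise $k\le d\le 2k-2$, and we set $\ell=d-k$, so $0\le \ell\le k-2$. Since $h\ge 2k-1-d=k-\ell-1$, any $(k+\ell,k-\ell-1)$-decomposition is also a $(d,h)$-decomposition. Hence it suffices to produce a $(k,k-1)$-decomposition and then apply the step of \cref{lem:super} exactly $\ell<k\le m$ times.

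\textbf{The shifting step.} For \cref{lem:super} we follow its constructive proof. Given a $d$-degenerate $E_1$ with its order and an $h$-bounded-degree $E_2$, we move edges from $E_2$ to $E_1$ (re-ordering as needed) until every vertex has $E_2$-degree at most $h-1$. Each move and each re-verification of the degeneracy order costs $\Oh{m}$, and at most $m$ edges move. So one application costs $\Oh{m^2}$, and all $\ell$ applications cost $\Oh{k m^2}\subseteq \Oh{m^3}$, using $k\le m$; we may assume $m\ge k$ or else treat the graph trivially.

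\textbf{The $(k,k-1)$ case.} This is the main obstacle. We must turn the inductive proof of \cref{thm:main} from \cref{sec:main} into an algorithm with $\Oh{m^2}$ work per induction level and $\Oh{m}$ levels.

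First, obtain a partition into $k$ forests in polynomial time via matroid partition / augmenting paths; a standard bound is $\Oh{m^2}$ or better for fixed structures, which is acceptable here.

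Then simulate the induction. Each inductive step removes a vertex or an edge of the generalised object, recurses, and repairs the returned decomposition. The repair is a local recolouring along an alternating path or chain between $E_1$ and $E_2$, found by BFS in $\Oh{m}$ time. The key claim to verify is that the number of repair moves per step is $\Oh{m}$, so each level costs $\Oh{m^2}$. This should follow from a potential argument: each move strictly decreases a nonnegative integer potential bounded by $\Oh{m}$, for instance the number of $E_2$-edges at overloaded vertices plus the position in the order. Since the induction measure (vertices plus edges) is $\Oh{m}$, the total is $\Oh{m^3}$.

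The delicate point is confirming that the auxiliary objects in the extended induction have size linear in $m$, and that no step triggers cascading re-computation. If a step does, I would amortise by charging each move to a potential decrease across the whole run.
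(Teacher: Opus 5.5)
Your outer layer matches the paper. Reducing to $d+h=2k-1$ (or to the trivial $(2k-1,0)$ case) and then applying \cref{lem:super} $d-k$ times is exactly what the paper does. The cost is harmless provided you implement the actual proof of \cref{lem:super}: the order $\ll$ stays fixed, you compute one matching saturating the heavy vertices (by Hall's theorem), and you add one backward red edge for each unmatched light vertex of red degree $h$. No ``re-ordering as needed'' or edge-by-edge re-verification is involved.

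The genuine gap is the $(k,k-1)$ case, which is the entire content of the lemma. Your plan to ``recurse and repair the returned decomposition'' by alternating-path recolourings is not how \cref{lem:main} works. Nothing in your proposal supports it either: you give no argument that such a repair always exists, that BFS finds it, or that $O(m)$ moves suffice, and your suggested potential is never shown to decrease. The $k$-forest partition you compute is also never used. In the paper no repair is needed at all. The decomposition is read off directly from a forward sequence of reductions, with Type~\ref{red:easy} always preferred when available; this preference is what gives the red-degree bound.

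What your proposal is missing is the one nontrivial computational task: recognising a reduction at each step. Types~\ref{red:easy} and~\ref{red:tree} are easy to detect. Type~\ref{red:hard}, however, requires deciding whether a neighbour $w$ of $v$ is safe, i.e.\ whether some proper subset of potential exactly $2$ contains $w$ but not $v$. That is a condition over exponentially many subsets.

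The paper handles it as follows:
\begin{itemize}
\item $w$ is safe iff the object obtained by deleting $\{v,w\}$ and incrementing $\redc(w)$ is still positive.
\item Positivity is a density condition, testable by a max-flow in the network $s\to{}$edges (capacity $2$) $\to{}$endpoints $\to t$ (capacity $2k-2\redc(u)$).
\item Since the current object is positive, only sets containing $w$ can become violated. So a single network $\mathcal{N}^{[w]}$, with the $w$--$t$ capacity lowered, suffices.
\item An integral maximum flow is maintained under edge deletions and red-count increments. Each update is repaired by $O(1)$ augmentations, using \cref{lem:friendly_reductions}, so each safety test costs $O(m)$.
\item There are $O(m^2)$ tests in total, giving $O(m^3)$.
\end{itemize}
Without some such procedure, your ``$O(m^2)$ work per level'' has nothing behind it. \cref{lem:friendly_high_rank} only guarantees that a reduction exists, not how to find one.
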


By contrast, in \cref{sec:hardness}, we show that finding $(d,h)$\nobreakdash-decompositions for general graphs without a constraint on the arboricity is \NP-hard. We formalize this result in the following theorem.
\smallskip

\defdecproblem
{\DPdec{d}{h}}
{A graph $G$}
{Does $G$ admit a $(d,h)$\nobreakdash-decomposition?}

\begin{restatable}{theorem}{thmhard}\label{thm:hard}
For any integers $d \geq 1$, $h \geq 1$, the problem \DPdec{d}{h} is \NP-complete.
\end{restatable}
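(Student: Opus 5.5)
The statement has two parts: membership in \NP{} and \NP-hardness. Membership is routine. A certificate consists of the partition $E=E_1\sqcup E_2$ together with a linear order of $V$. In polynomial time we check that every vertex has $E_2$-degree at most $h$ and at most $d$ $E_1$-edges to earlier vertices. Equivalently, we can drop the order from the certificate and verify $d$-degeneracy of $(V,E_1)$ greedily by repeatedly deleting a vertex of degree at most $d$.

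For hardness I would reduce from a Hamiltonicity variant, namely \DPfixham{r}: given an $r$-regular graph $H$ and a fixed edge $e_0$, decide whether $H$ has a Hamiltonian cycle through $e_0$. This problem is \NP-complete for $r=3$, via the standard reductions for cubic graphs. The guiding principle is edge counting and tightness. A $d$-degenerate graph on $N$ vertices has at most $dN-\binom{d+1}{2}$ edges, and an $h$-bounded-degree graph has at most $hN/2$ edges. I will build $G$ from $H$ so that $|E(G)|$ is exactly, or within a controlled slack of, the sum of these maxima. Then in any $(d,h)$-decomposition, $E_2$ is forced to be essentially $h$-regular, and $E_1$ is forced to be a maximal $d$-degenerate graph, that is, one admitting an elimination order in which every vertex except the first $d$ sends exactly $d$ edges backward.

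The construction proceeds in three steps.
\begin{enumerate}
\item Attach to every vertex $v$ of $H$ a rigid padding gadget, for instance copies of $K_{d+h+1}$ with some edges removed and identified at $v$. A gadget is too dense to be absorbed by $E_1$ alone, so it must place exactly $h-1$ of $v$'s $E_2$-capacity inside the gadget and consume a prescribed amount of $v$'s backward $E_1$-budget. The net effect is that among the three $H$-edges at $v$, exactly one lies in $E_2$ (a perfect matching) and exactly two lie in $E_1$.
\item Use the global tightness of $E_1$ to convert the degeneracy condition into connectivity. The $E_1$-edges of $H$ form a $2$-factor $F$. In a maximal $d$-degenerate structure with exact counting, every vertex beyond the first must have a backward $F$-edge. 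This is impossible if $F$ has two or more cycles, because each cycle would need its own ``root'' and would lose an edge from the count. So $F$ must be a single Hamiltonian cycle.
\item Anchor the root by a special gadget at $e_0$ whose counting slack forces $e_0\in E_1$, which gives the fixed-edge condition.
\end{enumerate}

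Conversely, given a Hamiltonian cycle through $e_0$, I would exhibit the decomposition explicitly: put the complementary matching in $E_2$, and eliminate the gadgets first, then the cycle vertices in cyclic order starting from $e_0$. Finally I check the degree bounds directly.

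The main obstacle is the design of the gadgets uniformly for all $d,h\ge1$. The gadgets must be rigid, in the sense that every $(d,h)$-decomposition restricted to a gadget looks the same up to symmetry. They must also pass exactly the right residual budget to the attachment vertex. The exact counting argument in step~2 must survive the interaction between the gadgets and the degeneracy order. I would first try to solve the case $d=h=1$ by hand, where $E_1$ is a forest and $E_2$ a matching. Then I would lift it by replacing each edge with small blocks that add $d-1$ to the degeneracy and $h-1$ to the degree budget.
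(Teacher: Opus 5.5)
Your \NP{} membership argument is fine, but the hardness part is a plan rather than a proof, and the gap is where you locate it yourself. The gadgets are never constructed, so none of the properties that Steps 1--3 rely on is established:
\begin{itemize}
\item that each gadget at $v$ absorbs exactly $h-1$ units of red capacity and a prescribed part of $v$'s backward blue budget, forcing exactly one red $H$-edge at every vertex;
\item that the global edge count is tight up to a controlled slack.
\end{itemize}
The counting in Step 2 is also unjustified as written. The backward blue edges of a vertex can just as well be gadget edges, so nothing yet forces every vertex but one to have a backward $F$-edge. More seriously, the plan fails in the case $d=h=1$ that you intend to solve first. There $E_1$ must be a forest, so the blue $H$-edges can never form a $2$-factor or a Hamiltonian cycle. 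Forcing $e_0\in E_1$ is the opposite of what is needed.

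The paper needs no gadgets, because it absorbs the two parameters separately.

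\emph{Absorbing $h$.} The paper reduces from \DPfixham{h+2}, which is \NP-complete for every regularity $r\ge 3$, and \emph{deletes} the fixed edge $e_0=\{u,v\}$. In any $(1,h)$-decomposition of the resulting graph, every vertex other than $u,v$ has blue degree at least $2$, and $u,v$ have blue degree at least $1$. So the blue forest has no isolated vertices and at most two leaves, hence it is a Hamiltonian $u$--$v$ path. Conversely, a Hamiltonian cycle through $e_0$ splits into such a path and an $h$-regular complement.

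\emph{Absorbing $d$.} This is done by induction: join an apex $a$ to all vertices of $d+h+2$ disjoint copies of $G$. In a $(d+1,h)$-decomposition, $a$ has at most $h$ red edges and at most $d+1$ backward blue edges. Hence some copy lies entirely after $a$ and is joined to $a$ only by blue edges. Every vertex of that copy spends one of its at most $d+1$ backward blue edges on $a$, which leaves a $(d,h)$-decomposition of $G$. For the converse, put $a$ first.
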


Below we present two examples that show the easy implication in \cref{thm:super}. Notice that these examples are simple graphs, therefore \cref{thm:super} provides also a characterisation of all possible decomposition types for all k-arboric simple graphs.


\begin{example}\label{exm:d}
For any integers $k \ge 1$, $h \ge 0$ the complete bipartite graph $K_{k,kh+h+k}$ is $k$\nobreakdash-arboric and does not admit a $(k-1,h)$\nobreakdash-decomposition.
\end{example}

Indeed, call $K_{k,kh+h+k}$ shortly $G$ and denote its bipartition classes by $X,Y$ where $\norm{X}=k$ and $\norm{Y}=kh+h+k$. It is easy to verify that $G$ is $k$\nobreakdash-arboric.
Suppose to the contrary, that there is a $(k-1,h)$\nobreakdash-decomposition of $G$ into a $h$\nobreakdash-bounded-degree graph $G_2$, and a graph $G_1$ with $(k-1)$\nobreakdash-degenerate order $\ll$.
Let $x$ be the vertex in $X$ that appears last in $\ll$.
As $x$ is of degree $kh+h+k$ in~$G$, $x$ is of degree at least $kh+k$ in~$G_1$, and since $\ll$ is $(k-1)$\nobreakdash-degenerate, there are at least $kh+k-(k-1)=kh+1$ vertices in $Y$ that appear later than $x$ in $\ll$.
For any vertex $y \in Y$ with $x \ll y$, we have that $y$ is of degree $k=(k-1)+1$ in~$G$ and has all the neighbors appearing earlier in the $(k-1)$\nobreakdash-degenerate order $\ll$.
Thus, at least one edge incident to $y$ is in $G_2$.
We get that there are at least $kh+1$ edges in $G_2$.
But, as $\norm{X}=k$ and $G_2$ is $h$\nobreakdash-bounded-degree, there are at most $kh$ edges in $G_2$ -- a contradiction.


\begin{example}\label{exm:dh}
For any integers $k \ge 1$, $d,h \ge 0$ the complete graph $K_{2k}$ is $k$\nobreakdash-arboric and does not admit a $(d,h)$\nobreakdash-decomposition with $d+h<2k-1$.
\end{example}

The $K_{2k}$ is well-known to be $k$\nobreakdash-arboric. If it has a $(d,h)$\nobreakdash-decomposition, then the last vertex in the  $d$\nobreakdash-degenerate order has degree at most $d+h$. As the degree of every vertex is $2k-1$, we get that $d+h\ge 2k-1$.

\subsection{Related Work}

The first nontrivial case of \cref{thm:main} is the case for $k=2$ 
and it already turns out to be hard.
In this case, \cref{thm:main} states that every $2$-arboric graph decomposes into a $2$-degenerate graph and a matching.
In a private communication, Grytczuk informed us that the question of whether \cref{thm:main} holds for $k=2$ was asked already in 1998 by Hałuszczak.
Some unpublished progress towards it was reported to him by Borowiecki and Sidorowicz, who worked on this problem with Enomoto and Hagita.
In full generality the problem for $k=2$ remained open, but an analogous decomposition was shown to exist for triangle-free planar simple graphs by Xu and Zhu~\cite{XuZ2025}.
Every triangle-free planar simple graph is $2$\nobreakdash-arboric, thus it follows from \cref{thm:main}.
The same follows for outerplanar simple graphs as it is another important family of planar graphs that are $2$\nobreakdash-arboric.

A series of papers~\cite{LihSWZ2001,DongX2009,LiLWZ2023} show that for every $m\in\set{5,6,7,8,9}$, a planar simple graph with no subgraph isomorphic to $C_4$ or $C_m$ admits a $(2,1)$\nobreakdash-decomposition.
It is an easy calculation using Euler's formula and the fact that in such a graph there is no $4$\nobreakdash-face, no two $3$\nobreakdash-faces are adjacent, and every $5$\nobreakdash-face is adjacent to at most three $3$\nobreakdash-faces, which shows that any such graph is $2$\nobreakdash-arboric.
Therefore, \cref{thm:main} gives a $(2,1)$\nobreakdash-decomposition for these graphs as well.

For $k=3$, a $(3,2)$\nobreakdash-decomposition and a $(4,1)$\nobreakdash-decomposition were shown to exist for every planar simple graph by Cho, Choi, Kim, Park, Shan, and Zhu~\cite{ChoCKPSZ2022}.
As planar simple graphs are $3$\nobreakdash-arboric, these results follow from \cref{thm:super}.
Other families of graphs that are known to be $3$\nobreakdash-arboric are $K_5$\nobreakdash-minor-free simple graphs and projective planar simple graphs, see~\cite{MukaeOST2022}.

For the particular case $(d,h)=(1,1)$, a graph that admits a $(1,1)$\nobreakdash-decomposition is said to be \emph{matching decyclable}.
Lima, Rautenbach, Souza, and Szwarcfiter~\cite{LimaRSS2017}, and then Protti and Souza~\cite{ProttiS2018} considered this concept and have shown that deciding if a graph is matching decyclable is \NP-complete.
Their reduction is the basis for the proof of \cref{thm:hard}.

\subsection{Applications}

One particular application of $(d,h)$\nobreakdash-decompositions is in the area of generalized colorings.
We say that a vertex coloring is \emph{$h$\nobreakdash-defective} if each color class spans an $h$\nobreakdash-bounded-degree subgraph.
The concept of $h$\nobreakdash-defective colorings was introduced by \v{S}krekovski~\cite{Skrekovski1999} and independently by Eaton and Hull~\cite{EatonH99}; see a survey by Wood~\cite{Wood2018}.
For any given graph~$G$, if $G$ admits a decomposition into a $d$\nobreakdash-degenerate graph $G_1$ and a $h$\nobreakdash-bounded-degree graph $G_2$, then the greedy proper coloring of $G_1$ produces an $h$\nobreakdash-defective coloring of~$G$ using at most $d+1$ colors.
Thus, \cref{thm:super} gives the following corollary.

\begin{corollary}\label{cor:defective}
For any integers $k \ge 1$ and $k\ge \ell \ge 1$ a $k$\nobreakdash-arboric multigraph admits a $(k-\ell)$\nobreakdash-defective $(k+\ell)$\nobreakdash-coloring. Moreover, there is an algorithm that constructs such a coloring in polynomial time.
\end{corollary}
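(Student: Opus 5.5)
The corollary follows by combining \cref{thm:super} with the greedy colouring argument from the paragraph before it. Fix $k\ge1$ and $k\ge\ell\ge1$, and set $d=k+\ell-1$ and $h=k-\ell$. Then $d\ge k$ because $\ell\ge1$, and $d+h=2k-1$. Also $d,h\ge0$ because $\ell\le k$. So \cref{thm:super} applies and gives, for every $k$\nobreakdash-arboric multigraph $G$, a $(k+\ell-1,k-\ell)$\nobreakdash-decomposition $E=E_1\sqcup E_2$. Here $G_1=(V,E_1)$ is $(k+\ell-1)$\nobreakdash-degenerate and $G_2=(V,E_2)$ is $(k-\ell)$\nobreakdash-bounded-degree. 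The edge case $\ell=k$ gives $h=0$ and $d=2k-1$; there the decomposition is trivial ($E_2=\emptyset$) and simply reflects that $G$ is $(2k-1)$\nobreakdash-degenerate.

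Next we colour $G_1$ properly with $k+\ell$ colours. Take a degeneracy order $\ll$ of $G_1$ in which every vertex has at most $k+\ell-1$ neighbours (counting multiplicity) earlier in the order. Process the vertices in this order and give each one a colour from $\{1,\dots,k+\ell\}$ not used by its earlier neighbours in $G_1$. Such a colour always exists, since at most $k+\ell-1$ colours are blocked, so the colouring of $G_1$ is proper. Consequently, every edge of $G$ whose endpoints receive the same colour lies in $E_2$. Each colour class therefore induces in $G$ a subgraph of $G_2$, whose degrees are at most $k-\ell$. This is exactly a $(k-\ell)$\nobreakdash-defective $(k+\ell)$\nobreakdash-colouring.

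For the algorithmic part, \cref{lem:algorithm} with $(d,h)=(k+\ell-1,k-\ell)$ produces the decomposition in time $\Oh{m^3}$. A degeneracy order of $G_1$ is found by repeatedly removing a vertex of minimum degree. The greedy colouring then takes linear time. No step is a real obstacle; the only point needing care is choosing the parameters so that the hypotheses $d\ge k$ and $d+h\ge 2k-1$ of \cref{thm:super} hold, which we checked above.
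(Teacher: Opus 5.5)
Your proof is correct and follows the paper's argument: apply \cref{thm:super} (and \cref{lem:algorithm} for the algorithmic part) with $(d,h)=(k+\ell-1,k-\ell)$. Then greedily colour the $d$-degenerate part properly with $d+1=k+\ell$ colours, so that every monochromatic edge lies in the $(k-\ell)$-bounded-degree part; your parameter check and your handling of the case $\ell=k$ are both fine.
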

Notice that \cref{cor:defective} is based on greedy coloring and therefore it is very robust.
For example, it holds also in more general settings of list colorings, paintability, DP\nobreakdash-colorings, and Alon-Tarsi numbers, see~\cite{LiLWZ2023} for details.

\section{Decomposition of type $(k,k-1)$}\label{sec:main}

This section is devoted to the proof of \cref{thm:main}, the most difficult part of \cref{thm:super}.
Our proof of \cref{thm:main} goes by induction on the size of a graph and shows a more general result given in \cref{lem:main}.
The main part of the inductive argument is in the proof of \cref{lem:friendly_high_rank}.
As every $1$\nobreakdash-arboric graph is a forest, it is $1$\nobreakdash-degenerate, and therefore admits a $(1,0)$\nobreakdash-decomposition. Consequently, \cref{thm:main} for $k=1$ holds  trivially. Thus, we fix an integer value $k \ge 2$ for the rest of this section.
The general idea of the proof is to give a procedure that, for any given graph~$G=(V,E)$, selects edges $E_2 \subseteq E$ and constructs a linear order~$\ll$ on~$V$ that is a $k$\nobreakdash-degenerate order of~$G\setminus E_2$.
We will say that an edge selected into $E_2$ is \emph{colored red}.
The procedure is carried out in steps.
In each step, some edge is colored red and removed, or some vertex~$v$ with degree at most $k$ in the current graph is placed last in $\ll$.
In the latter case, edges incident to~$v$ are \emph{colored blue} and removed from the graph together with the vertex $v$.
The procedure continues recursively in a smaller graph.
If we are able to ensure that every vertex is incident to at most $k-1$ red edges, then 
the blue edges span a $k$\nobreakdash-degenerate graph and the red edges span a $(k-1)$\nobreakdash-bounded-degree graph, certifying a $(k,k-1)$\nobreakdash-decomposition of $G$.

When there is a vertex with degree at most~$k$, the algorithm can place it last in the order without any trouble.
But, the arboricity condition gives only that there is always at least one vertex with degree at most~$2k-1$.
When the degree of~$v$ is between $k+1$ and~$2k-1$, and we want to place $v$ last in~$\ll$, we must color some edges incident to~$v$ red before doing so.
The main difficulty is that when we start coloring edges red, we must take it into account in the later steps of the procedure.
Before proceeding to the main result, we need to define some necessary notions.

In order to account for the red edges colored in earlier steps of the procedure, we say that each vertex has some \emph{external} red edges, possibly zero.
For the sake of our argument, it is enough to count these external edges for each vertex.
An \emph{object} $(G,\redc)$ consists of a graph $G=(V,E)$ and a \emph{red-count function} $\redc$ that assigns an integer $\redc(v) \in \set{0,1,\ldots,k-1}$ to each vertex $v \in V$.
For any subset $U \subseteq V$, we will use $\redc(U)$ to denote  $\sum_{u \in U}\redc(u)$.
We say that an object~$(G,\redc)$ is \emph{empty} when $G$ has no vertices, that $(G,\redc)$ is \emph{$k$\nobreakdash-arboric} when $G$ is $k$\nobreakdash-arboric, and that $(G,\redc)$ is \emph{internal} when there are no external edges, i.e., $\redc(V) = 0$.
For any subset of vertices~$U\subseteq V$, an object~$(G[U],\redc[U])$ with $\redc[U]$ being a restriction of $\redc$ to $U$ is called a \emph{subobject} of~$(G,\redc)$ \emph{spanned} by $U$, or simply a \emph{subobject}.
A \emph{$(k,k-1)$\nobreakdash-decomposition} of an object~$(G,\redc)$ is a partition of the edges of $G$ into two subsets $E = E_1 \sqcup E_2$ so that $G_1=(V,E_1)$ is $k$\nobreakdash-degenerate, and every vertex~$v$ has degree at most~$k-1-\redc(v)$ in~$G_2=(V,E_2)$.
We say that an object~$(G,\redc)$ \emph{decomposes} if it admits a $(k,k-1)$\nobreakdash-decomposition.
We usually use $\ll$ to denote a $k$\nobreakdash-degenerate order of~$G_1$.
Our goal is to prove \cref{thm:main}, which can now be reformulated as saying that every internal $k$\nobreakdash-arboric object decomposes.

In order to prove \cref{thm:main}, we need to define a slightly larger class of objects that is more useful for the induction.
For an object $(G,\redc)$ we use the \emph{potential function} $\pot_{G,\redc}$ defined for every subset $U \subseteq V$ of vertices as
\[
    \pot_{G,\redc}(U) = 2k\cdot\norm{U} - 2\cdot\norm{E[U]} - 2\redc(U) = \sum_{u \in U} \left(2k - \deg_{G[U]}(u) - 2\redc(u)\right)\text{.}
\]
We define the \emph{total potential} of $(G,\redc)$ to be $\Pot(G,\redc) = \pot_{G,\redc}(V)$.
An easy calculation shows that the potential function is submodular.
\begin{observation}\label{obs:potential_submodular}
For any subsets $U_1, U_2 \subseteq V$ of vertices in an object~$(G,\redc)$ we have 
\begin{align*}    
    \pot_{G,\redc}(U_1\cup U_2) + \pot_{G,\redc}(U_1\cap U_2) &= \pot_{G,\redc}(U_1) + \pot_{G,\redc}(U_2) - 2\cdot\norm{E[U_1\setminus U_2,U_2\setminus U_1]} \\
    &\le \pot_{G,\redc}(U_1) + \pot_{G,\redc}(U_2) \text{.}
\end{align*}
\end{observation}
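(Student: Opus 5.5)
The identity is a direct counting argument: split the potential into its vertex terms and its edge terms and compare both sides. I will write $\pot_{G,\redc}(U) = \sum_{u\in U}\brac{2k - 2\redc(u)} - 2\norm{E[U]}$. Here $E[U]$ is the multiset of edges with both endpoints in $U$, counted with multiplicity.

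\textbf{Vertex part.} The map $U \mapsto \sum_{u\in U}(2k-2\redc(u))$ is additive over vertices, so it is modular. By inclusion--exclusion on multisets of vertices it contributes equally to $\pot(U_1\cup U_2)+\pot(U_1\cap U_2)$ and to $\pot(U_1)+\pot(U_2)$.

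\textbf{Edge part.} It therefore suffices to show
\[
\norm{E[U_1\cup U_2]} + \norm{E[U_1\cap U_2]} = \norm{E[U_1]} + \norm{E[U_2]} + \norm{E[U_1\setminus U_2,U_2\setminus U_1]}\text{.}
\]
Once this holds, multiplying by $-2$ gives the claimed equality. I will verify it by fixing a single edge $e=uv$ of $G$ (loops do not occur) and computing its contribution to each side. The contribution depends only on which of the three regions $A=U_1\setminus U_2$, $B=U_2\setminus U_1$, $C=U_1\cap U_2$, or the outside of $U_1\cup U_2$, contain $u$ and $v$.

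The cases are as follows.
\begin{itemize}
\item If either endpoint lies outside $U_1\cup U_2$, the edge contributes $0$ to both sides.
\item If both endpoints lie in $C$, it contributes $2$ to both sides.
\item If both lie in $A$, or one lies in $A$ and one in $C$, it contributes $1$ to both sides, via $E[U_1\cup U_2]$ on the left and $E[U_1]$ on the right. The symmetric cases with $B$ in place of $A$ are handled the same way.
\item The only remaining case is one endpoint in $A$ and the other in $B$. Such an edge contributes $1$ on the left (to $E[U_1\cup U_2]$). On the right it contributes $0$ to $E[U_1]$ and $E[U_2]$, and $1$ to the cross term.
\end{itemize}
Summing over all edges with multiplicity gives the identity. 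The inequality then follows because $\norm{E[U_1\setminus U_2,U_2\setminus U_1]}\ge 0$.

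There is no real obstacle here. The only point needing care is to use the vertex-sum form of the potential rather than the degree form, so that edges are not double counted.
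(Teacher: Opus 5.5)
Your proposal is correct. It is exactly the ``easy calculation'' the paper asserts without writing out: you split off the modular vertex term and then check each edge's contribution according to where its endpoints lie. Your case analysis covers all endpoint placements, and counting edge by edge handles multiplicities properly.
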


Further, observe that the potential of any subset of vertices is always an even integer, and that the potential of the empty subset is $0$.
We say that an object~$(G,\redc)$ is \emph{positive} if, for every non-empty subset~$\emptyset \neq U \subseteq V$ of vertices, we have $\pot_{G,\redc}(U) \ge 2$.
By the Nash-Williams Theorem, we have the following bound on the potential of any non-empty subset in an internal $k$\nobreakdash-arboric object $(G,\redc)$:
\[
    \forall_{\emptyset \neq U \subseteq V}\,\pot_{G,\redc}(U) = 2k\cdot\norm{U} - 2\cdot\norm{E[U]} \ge 2k\cdot\norm{U} - 2\cdot\brac{k\cdot\brac{\norm{U}-1}} \ge 2k \ge 2\text{,}
\]
which implies the following.
\begin{observation}\label{obs:arboric_positive}
    Every internal $k$\nobreakdash-arboric object is positive.
\end{observation}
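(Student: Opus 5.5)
The statement is \cref{obs:arboric_positive}: every internal $k$\nobreakdash-arboric object is positive. The proof is a direct verification from the definitions, using the Nash-Williams characterisation only in its easy direction.

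First, take an internal $k$\nobreakdash-arboric object $(G,\redc)$ with $G=(V,E)$. Internal means $\redc(V)=0$. Since $\redc$ takes values in $\set{0,1,\ldots,k-1}$, all values are nonnegative, so $\redc(u)=0$ for every vertex $u$. Hence $\redc(U)=0$ for every $U\subseteq V$, and the potential reduces to $\pot_{G,\redc}(U)=2k\norm{U}-2\norm{E[U]}$.

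Second, bound $\norm{E[U]}$ for a fixed nonempty $U\subseteq V$. This needs only that $G$ is a union of $k$ forests, not the full Nash-Williams theorem. Restricted to $G[U]$, each of the $k$ forests is still a forest on $\norm{U}$ vertices, so it has at most $\norm{U}-1$ edges. Edge multiplicities cause no trouble: parallel edges would form a cycle of length two, so they must lie in different forests. Summing over the forests gives $\norm{E[U]}\le k(\norm{U}-1)$.

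Finally, substitute: $\pot_{G,\redc}(U)\ge 2k\norm{U}-2k(\norm{U}-1)=2k\ge 2$, using $k\ge1$ (here in fact $k\ge2$). As this holds for every nonempty $U$, the object is positive.

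No step is a real obstacle. The only points needing care are noting that nonnegativity of $\redc$ forces $\redc\equiv 0$, and that the forest bound counts edges with multiplicity.
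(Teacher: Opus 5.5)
Your proof is correct and follows the paper's argument: with $\redc\equiv 0$ the potential is $2k\norm{U}-2\norm{E[U]}$, and the bound $\norm{E[U]}\le k(\norm{U}-1)$ gives $\pot_{G,\redc}(U)\ge 2k\ge 2$ for every nonempty $U$. The paper cites the Nash-Williams Theorem for this edge bound, and you rightly observe that only its easy direction is needed, namely restricting each of the $k$ forests to $U$.
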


Objects with potential~$0$ include all internal $2k$\nobreakdash-regular graphs.
The statement of \cref{thm:main} is false for any $2k$\nobreakdash-regular graph, as it is not $(2k-1)$\nobreakdash-degenerate (an obvious necessary condition for a $(k,k-1)$\nobreakdash-decomposition). 
In fact, there is a wide variety of graphs with potential~$0$ for which the statement of \cref{thm:main} fails. This includes even $(k+1)$\nobreakdash-degenerate graphs, with the biclique $K_{k+1,k(k+1)}$ providing one such example. We therefore restrict our attention to positive objects. However, the statement of \cref{thm:main} is false even for some internal positive graphs. 

\begin{example}\label{exm:positive_no_decomposition}
Consider a graph $G=(V,E)$ with vertex set $V=\set{x_0,x_1,\dots x_{k+1},y_0,y_1,\dots,y_{k+1}}$ and $E$ having $2k-1$ copies of edge $\set{x_i,y_i}$ for each $i=1,\dots,k+1$, and single edges connecting vertex $x_0$ with every other vertex $x_i$ and $y_0$ with every other vertex $y_i$. See \cref{fig:obstructed_counterexample} for $k=2$. \newline
The graph $G$ is internal positive and it does not decompose.
\end{example}

\begin{figure}[h]
	\centering
	\begin{tikzpicture}[scale=0.8]
	\node[draw, circle] (x0) at (0,4) {$x_0$};
	\node[draw, circle] (y0) at (0,-4) {$y_0$};
	\node[draw, circle] (x1) at (-4,2) {$x_1$};
	\node[draw, circle] (y1) at (-4,-2) {$y_1$};
	\node[draw, circle] (x2) at (0,2) {$x_2$};
	\node[draw, circle] (y2) at (0,-2) {$y_2$};
	\node[draw, circle] (x3) at (4,2) {$x_3$};
	\node[draw, circle] (y3) at (4,-2) {$y_3$};
	
	\draw[thick] (x0) -- (x1);
	\draw[thick] (x0) -- (x2);
	\draw[thick] (x0) -- (x3);
	\draw[thick] (y0) -- (y1);
	\draw[thick] (y0) -- (y2);
	\draw[thick] (y0) -- (y3);
	
	\foreach \i in {1,...,3} {
		\draw[thick] (x1) edge[bend left={-30+15*\i}] (y1);
		\draw[thick] (x2) edge[bend left={-30+15*\i}] (y2);
		\draw[thick] (x3) edge[bend left={-30+15*\i}] (y3);
	}
	\end{tikzpicture}
	\caption{An internal positive object that does not decompose for $k=2$.\label{fig:obstructed_counterexample}}
\end{figure}

It is an easy exercise to check that the internal object from \cref{exm:positive_no_decomposition} is indeed positive and that it does not decompose. The reason is the presence of what we refer to below as obstructions.

For any vertex $v$ of $G$, let the \emph{score} of~$v$ in~$(G,\redc)$ be defined as
\[
    \score_{G,\redc}(v) = \max(0,2k-\deg_G(v)-\redc(v))\text{.}
\]
We define the \emph{total score} of an object~$(G,\redc)$ to be $\score(G,\redc) = \sum_{v \in V}\score_{G,\redc}(v)$.
We say that a non-empty object~$(G,\redc)$ is an \emph{obstruction} when the total score of~$G$ is strictly smaller than $k + \frac{1}{2}\Pot(G,\redc)$.
We say that an object~$(G,\redc)$ is \emph{unobstructed} when every subobject of~$(G,\redc)$ is not an obstruction.

By \cref{obs:arboric_positive}, we have that any internal $k$\nobreakdash-arboric object is positive.
For the potential and the total score of an internal $k$\nobreakdash-arboric object, we get the following inequalities.
\begin{align*}
  \score(G,\redc)=\sum_{v\in V} \score_{G,\redc}(v) & \ge \sum_{v \in V} \brac{2k-\deg_G(v)} = 2k\norm{V} -2\norm{E} \\
    & = k(\norm{V}+1) - \norm{E} + k(\norm{V}-1) - \norm{E} \\ 
    & \ge  k + k\norm{V} - \norm{E} = k+\frac{1}{2}\Pot(G,\redc) 
    \text{.}
\end{align*}
As any subgraph of a $k$\nobreakdash-arboric graph is also $k$\nobreakdash-arboric, we obtain the following.
\begin{observation}\label{obs:arboric_friendly}
    Every internal $k$\nobreakdash-arboric object is unobstructed.
\end{observation}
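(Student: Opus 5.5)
The plan is to check the obstruction condition subobject by subobject, reusing the displayed computation for a general vertex subset in place of $V$. Let $(G,\redc)$ be an internal $k$\nobreakdash-arboric object and take any non-empty $U \subseteq V$. The subobject $(G[U],\redc[U])$ is again internal, since $\redc[U] \equiv 0$. It is also $k$\nobreakdash-arboric, because a subgraph of a graph partitioned into $k$ forests inherits that partition. So it suffices to show that a non-empty internal $k$\nobreakdash-arboric object is not an obstruction, i.e.\ that its total score is at least $k+\frac12\Pot$. The empty subobject needs no argument, since obstructions are non-empty by definition.

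For such an object $(H,0)$ with $H=(W,F)$, I bound each score from below by dropping the $\max$ with $0$: $\score_{H,0}(w) \ge 2k - \deg_H(w)$. Summing over $W$ gives $\score(H,0) \ge 2k\norm{W} - 2\norm{F}$. Next I split $2k\norm{W} - 2\norm{F}$ as $\bigl(k(\norm{W}+1) - \norm{F}\bigr) + \bigl(k(\norm{W}-1) - \norm{F}\bigr)$. The first bracket equals $k + \frac12\Pot(H,0)$, because $\Pot(H,0) = 2k\norm{W} - 2\norm{F}$ when the red-count is zero.

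For the second bracket, $H$ is $k$\nobreakdash-arboric, so $\norm{F} \le k(\norm{W}-1)$. This is the easy direction of Nash-Williams: each of the $k$ forests on $\norm{W}$ vertices has at most $\norm{W}-1$ edges. It uses $\norm{W}\ge1$, which is why non-emptiness matters. Hence the second bracket is non-negative and $\score(H,0) \ge k + \frac12 \Pot(H,0)$, so $(H,0)$ is not an obstruction.

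There is no real obstacle here. The only points needing care are that restricting to a subobject preserves both internality and $k$\nobreakdash-arboricity, and that the edge bound is applied to the subgraph's own vertex count. Both are immediate, so the observation follows for every subobject.
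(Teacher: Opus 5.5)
Your proof is correct and matches the paper's argument: drop the $\max$ in each score, split $2k|W|-2|F|$ into $k+\frac12\Pot(H,0)$ plus $k(|W|-1)-|F|\ge 0$ using the forest edge bound, and apply this to every subobject, since internality and $k$-arboricity pass to induced subgraphs.
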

Notice that, just by definitions, any subobject of a positive unobstructed object is a positive unobstructed object.
The remainder of the section builds a proof of \cref{lem:main} that states that every positive unobstructed object decomposes.
\cref{thm:main} will follow from \cref{lem:main} by \cref{obs:arboric_positive} and \cref{obs:arboric_friendly}.
Before presenting the proofs, we need to define some useful notions and introduce the necessary tools.

We say that a proper subset $\emptyset \neq X \subsetneq V$ of vertices is \emph{dangerous} in $(G,\redc)$ when $\pot_{G,\redc}(X) = 2$.
The reason we call such sets dangerous is that our algorithm, which is designed to keep the object positive, cannot increase the red-count of a vertex in a dangerous subset without removing some edges.
Notice that for any vertex $v$ with $\redc(v) = k-1$, we have that $\pot_{G,\redc}(\set{v}) = 2k - 2(k-1) = 2$ and hence the set $\set{v}$ is dangerous, except for a trivial case $V=\set{v}$.
We say that a vertex~$w$, a neighbor of vertex~$v$, is a \emph{safe neighbor} of~$v$ if for every dangerous subset~$X$, we have $w \in X \implies v \in X$.
Otherwise, $w$ is a \emph{dangerous neighbor} of~$v$.
Notice that even when the total potential of the object is $2$, we explicitly do not call set $V$ dangerous.

Consider any two dangerous subsets $X_1,X_2$ in a positive object $(G,\redc)$.
By \cref{obs:potential_submodular}, we get
\begin{align*}
    \pot_{G,\redc}(X_1\cup X_2)+\pot_{G,\redc}(X_1\cap X_2) &=\pot_{G,\redc}(X_1)+\pot_{G,\redc}(X_2)-2\norm{E[X_1\setminus X_2,X_2\setminus X_1]}\\
                            &=4-2\norm{E[X_1\setminus X_2,X_2\setminus X_1]}\text{.}
\end{align*}
As $(G,\redc)$ is positive, we have $\pot_{G,\redc}(X_1\cup X_2) \ge 2$ and $\pot_{G,\redc}(X_1\cap X_2) \ge 2$ when $X_1\cap X_2 \neq \emptyset$.
We easily reach the following observation.
\begin{observation}\label{obs:dangerous}
For any two dangerous subsets $X_1$, $X_2$ in a positive object~$(G,\redc)$, we have:
\begin{itemize}
    \item If $X_1 \cap X_2 = \emptyset$, then there is at most one edge in $E[X_1,X_2]$. Moreover, if there is an edge in $E[X_1,X_2]$ then $X_1\cup X_2$ is also a dangerous subset or the whole set $V$.
    \item If $X_1 \cap X_2 \neq \emptyset$, then both $X_1 \cup X_2$ and $X_1 \cap X_2$ are dangerous subsets or the whole set $V$.
    \item If $X_1 \cap X_2 \neq \emptyset$, then there are no edges in $E[X_1\setminus X_2,X_2\setminus X_1]$.
\end{itemize}
\end{observation}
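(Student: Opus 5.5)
All three items follow from the submodularity identity in \cref{obs:potential_submodular}, applied to the two dangerous sets, together with positivity of $(G,\redc)$ and parity. Dangerous sets have potential exactly $2$, so the identity reads
\[
\pot_{G,\redc}(X_1\cup X_2)+\pot_{G,\redc}(X_1\cap X_2)=4-2e,
\qquad e=\norm{E[X_1\setminus X_2,X_2\setminus X_1]}.
\]
We always have $X_1\cup X_2\neq\emptyset$, so positivity gives $\pot_{G,\redc}(X_1\cup X_2)\ge 2$. The cases differ in whether the intersection term is $0$ (empty intersection) or at least $2$ (non-empty intersection).

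\emph{Disjoint case.} Here $X_1\cap X_2=\emptyset$, so $X_1\setminus X_2=X_1$, $X_2\setminus X_1=X_2$, and $e=\norm{E[X_1,X_2]}$. The potential of the empty set is $0$, so the identity becomes $\pot_{G,\redc}(X_1\cup X_2)=4-2e$. Combined with $\pot_{G,\redc}(X_1\cup X_2)\ge2$ this gives $e\le1$. If $e=1$, then $\pot_{G,\redc}(X_1\cup X_2)=2$. The union is non-empty, so it is either a proper subset, hence dangerous by definition, or it equals $V$.

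\emph{Intersecting case.} Here both $X_1\cap X_2$ and $X_1\cup X_2$ are non-empty, so positivity gives each potential at least $2$. The left side of the identity is therefore at least $4$, which forces $e=0$; this is the third item. It also forces both potentials to equal exactly $2$, since they are even integers at least $2$ summing to $4$. The intersection is contained in $X_1\subsetneq V$, so it is proper and hence dangerous. The union is either proper, hence dangerous, or equal to $V$. This gives the second item.

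There is no real obstacle. The only care needed is the bookkeeping: using $\pot_{G,\redc}(\emptyset)=0$ in the disjoint case, and handling the caveat that $V$ itself is explicitly not called dangerous. That caveat is why each conclusion reads "dangerous or the whole set $V$".
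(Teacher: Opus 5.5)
Your proof is correct and follows the paper's own argument: apply the submodularity identity of \cref{obs:potential_submodular} to two sets of potential $2$, then use positivity, with $\pot_{G,\redc}(\emptyset)=0$ in the disjoint case. The appeal to parity in the intersecting case is unnecessary, since two quantities that are each at least $2$ and sum to at most $4$ must both equal $2$, but it does no harm.
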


In order to give an inductive proof that every positive unobstructed object decomposes, we use the following \emph{reductions} of three different \emph{types}.
Each reduction has a \emph{base} that is a set of at most two vertices and a positive integer \emph{rank} that will play a role later.
\begin{enumerate}
    \item\label{red:easy} Any vertex~$v$ of degree $\deg_G(v) \le k$ has $\score_{G,\redc}(v)\ge2k-k-(k-1)>0$ and is the base of a reduction of Type~\ref{red:easy} and rank equal to $\score_{G,\redc}(v)$.
        The algorithm applies this reduction by coloring all edges incident to $v$ blue, placing $v$ last in $\ll$, and removing $v$ from $G$.
    \item\label{red:hard} Any vertex~$v$ of degree $\deg_G(v) > k$ and of $\score_{G,\redc}(v) > 0$ with at least one safe neighbor $w$
        is the base of a reduction of Type~\ref{red:hard} and rank equal to $\score_{G,\redc}(v)$.
        The algorithm applies this reduction by coloring the edge $\set{v,w}$ red, removing the edge $\set{v,w}$ from $G$, and increasing the red-count of $w$ by one.
        Notice that the red-count of $v$ does not increase.
    \item\label{red:tree} Any edge $\set{v_1, v_2}$ in $E$ with
        $\deg_G(v_1) > k$, $\deg_G(v_2) > k$, 
        $\score_{G,\redc}(v_1) > 0$, $\score_{G,\redc}(v_2) > 0$,
        no safe neighbor of $v_1$, and no safe neighbor of $v_2$ is the base of a reduction of Type~\ref{red:tree} and rank equal to $\min(\score_{G,\redc}(v_1), \score_{G,\redc}(v_2))$.
        The algorithm applies this reduction by coloring the edge $\set{v_1,v_2}$ red and removing the edge $\set{v_1,v_2}$ from $G$.
        Notice that the red-counts of the vertices do not increase.
\end{enumerate}

\begin{figure}[h]
  \centering
  \tikzset{
    vertex/.style={circle, draw, fill=black, inner sep=1.5pt},
    lbl/.style={font=\small},
    base/.style={draw=green!60!black, fill=green!15, rounded corners}
  }

  \begin{subfigure}[t]{0.3\textwidth}
    \centering
    \begin{tikzpicture}
      \filldraw[base] (-0.5, -0.5) rectangle (0.5, 0.5);
      \node[vertex, label=above:$v$] (v) at (0,0) {};
      \foreach \a in {135, 180, 225} {
        \node[vertex] (u\a) at (\a:1) {};
        \draw[defblue, dashed, thick] (v) -- (u\a);
      }
    \end{tikzpicture}
    \medskip
    \caption{Type~\ref{red:easy} ($\deg_G(v) \le k$):\\remove $v$; color incident edges blue}
  \end{subfigure}
  \hfill
  \begin{subfigure}[t]{0.3\textwidth}
    \centering
    \begin{tikzpicture}
      \filldraw[base] (-0.5, -0.5) rectangle (0.5, 0.5);
      \node[vertex, label=above:$v$] (v) at (0,0) {};
      \node[vertex, label=above:$w$] (w) at (1.5,0) {};
      \draw[red, dashed, thick] (v) -- (w);
      \node[lbl, red,  circle, draw, right=1mm] at (w.east) {$+1$};
      
      \foreach \a in {135, 180, 225} {
        \node[vertex] (u\a) at (\a:1) {};
        \draw (v) -- (u\a);
      }
    \end{tikzpicture}
    \medskip
    \caption{Type~\ref{red:hard} ($\score_{G,\redc}(v) > 0$; $\deg_G(v) > k$; safe neighbor $w$):\\ color $vw$ red; increment $\redc(w)$}
  \end{subfigure}
  \hfill
  \begin{subfigure}[t]{0.3\textwidth}
    \centering
    \begin{tikzpicture}
      \filldraw[base] (-0.5, -0.5) rectangle (2, 0.5);
      \node[vertex, label=above:$v_1$] (v1) at (0,0) {};
      \node[vertex, label=above:$v_2$] (v2) at (1.5,0) {};
      \draw[red, dashed, thick] (v1) -- (v2);
      
      \foreach \a in {135, 180, 225} {
        \node[vertex] (u1\a) at (\a:1) {};
        \draw (v1) -- (u1\a);
      }
      \foreach \a in {45, 0, -45} {
        \node[vertex] (u2\a) at ($(1.5,0) + (\a:1)$) {};
        \draw (v2) -- (u2\a);
      }
    \end{tikzpicture}
    \medskip
    \caption{Type~\ref{red:tree} ($\score_{G,\redc}(v_i) > 0$; $\deg_G(v_i) > k$; no safe neighbors):\\ color $v_1v_2$ red}
  \end{subfigure}
  \caption{Three types of reductions. Green rectangles indicate bases of reductions. Dashed lines denote colored edges that are removed from the graph~$G$.}
  \label{fig:reductions}
\end{figure}

First, observe that the red-count of a vertex is increased only in a reduction of Type~\ref{red:hard} and only for a safe neighbor of the base vertex.
We wanted the red-count to account for the red edges colored in earlier steps of the procedure, so this might be a little confusing at first, but it is deliberate.
The vague idea is that it does not make sense to further increase the red-count of a vertex once it has positive score. This gets clarified only in the proof of \cref{lem:main} when we do the final count of the total number of red edges incident to a vertex.

Second, observe that:
\begin{itemize}
\item each vertex in the base of a reduction of Type~\ref{red:easy} has degree at most $k$,
\item each vertex in the base of a reduction of Type~\ref{red:hard} has degree at least $k+1$, degree plus red-count at most $2k-1$, and a safe neighbor,
\item each vertex in the base of a reduction of Type~\ref{red:tree} has degree at least $k+1$, degree plus red-count at most $2k-1$, and no safe neighbor.
\end{itemize}
Therefore, only two different reductions of Type~\ref{red:tree} can have a non-empty intersection of the bases.
Notice, however, that if $\set{v_1, v_2}$ is the base of a reduction of Type~\ref{red:tree}, then there is only one edge between vertices $v_1$ and $v_2$ in $G$. As $v_1$ is a dangerous neighbor of $v_2$, there exists a dangerous subset $V_1$ that contains $v_1$ and does not contain $v_2$. Similarly, an analogous subset $V_2$ exists. By \cref{obs:dangerous} subsets $V_1,V_2$ have empty intersection and there is exactly one edge between them.

We say that a set of reductions is \emph{independent} if the bases of the reductions are pairwise disjoint.
We define the \emph{total rank} of an object $(G,\redc)$, denoted by $\rank(G,\redc)$, to be the maximum possible sum of the ranks of any independent set of reductions available in~$(G,\redc)$.
Observe that a reduction of Type~\ref{red:easy} or Type~\ref{red:hard} has a base consisting of a single vertex, and the rank of such a reduction equals the score of that vertex.
Reductions of Type~\ref{red:tree} have a base consisting of two vertices, and the rank of such a reduction equals the minimum score of the two vertices in the base.
Hence, the sum of the ranks of any independent set of reductions is at most the sum of the scores of the vertices in the bases of these reductions. Thus, the total rank of any object is at most the total score of this object.
Further, the total rank equals the total score if and only if every vertex with a positive score is the base of a reduction of Type~\ref{red:easy} or Type~\ref{red:hard}.

We say that an object~$(G',\redc')$ is \emph{smaller} than an object~$(G,\redc)$ if $G'$ has fewer vertices than $G$, or if both graphs have the same number of vertices and $G'$ has fewer edges than $G$.
Observe that applying any of the reductions either reduces the number of vertices or the number of edges and yields a smaller object.
The lemma shows that reductions preserve being positive and unobstructed.

\begin{lemma}\label{lem:friendly_reductions}
    Applying any of the reductions in a positive unobstructed object gives a smaller positive unobstructed object.
\end{lemma}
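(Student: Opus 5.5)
The plan is to verify the two properties separately for each reduction type, comparing every subset $U$ of vertices of the new object $(G',\redc')$ with the same subset in the old object $(G,\redc)$. That the new object is smaller was already observed. The only quantities that change are the degrees of the base vertices (and of $w$), the red-count of $w$ in Type~\ref{red:hard}, and the vertex set in Type~\ref{red:easy}. So every comparison is local: we only need to track how $\pot(U)$ and the scores of the at most two affected vertices move.

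\textbf{Positivity.}
\begin{itemize}
\item Type~\ref{red:easy}: every subset of $V\setminus\set{v}$ spans the same edges and has the same red-counts as before, so its potential is unchanged.
\item Type~\ref{red:tree}: deleting an edge never decreases any potential; it raises $\pot(U)$ by $2$ when $U\supseteq\set{v_1,v_2}$.
\item Type~\ref{red:hard}: we delete $vw$ and increase $\redc(w)$.
 \begin{itemize}
 \item If $U\supseteq\set{v,w}$, the two effects cancel and $\pot(U)$ is unchanged.
 \item If $U$ misses $w$, nothing changes.
 \item If $w\in U$ and $v\notin U$, then $\pot(U)$ drops by exactly $2$. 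Such a $U$ is a proper subset, since $v\notin U$. Because $w$ is a safe neighbor of $v$, $U$ is not dangerous, so $\pot_{G,\redc}(U)\ge 4$ and the new potential is still $\ge 2$.
 \end{itemize}
 We must also check that $\redc'(w)\le k-1$, so that the result is a legitimate object. If $\redc(w)=k-1$, then $\set{w}$ is a proper subset (it misses $v$) of potential $2$, hence dangerous. It contains $w$ but not $v$, contradicting safety. So $\redc(w)\le k-2$.
\end{itemize}

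\textbf{Unobstructedness.} For a subset $U$ we compare $\score$ of the subobject on $U$ with $k+\frac12\pot(U)$. Recall that the score inside the subobject uses $\deg_{G[U]}$.
\begin{itemize}
\item Type~\ref{red:easy}: every subobject of the new object is literally a subobject of the old one, so there is nothing to prove.
\item Type~\ref{red:tree}, $U\supseteq\set{v_1,v_2}$: the right-hand side grows by $1$. For each $v_i$ we have $2k-\deg_{G[U]}(v_i)-\redc(v_i)\ge 2k-\deg_G(v_i)-\redc(v_i)=\score_{G,\redc}(v_i)>0$. So the $\max(0,\cdot)$ truncation is inactive, and lowering the degree by one raises each score by exactly $1$. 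The total score grows by $2\ge 1$.
\item Type~\ref{red:tree}, $U$ meeting $\set{v_1,v_2}$ in at most one vertex: nothing changes.
\item Type~\ref{red:hard}, $U\supseteq\set{v,w}$: the potential is unchanged. The score of $w$ is unchanged, because its degree drops by one and its red-count rises by one. By the same non-truncation argument, the score of $v$ rises by exactly $1$.
\item Type~\ref{red:hard}, $w\in U$ and $v\notin U$: the right-hand side drops by $1$, while the score of $w$ drops by at most $1$ and the other scores are unchanged. The inequality $\score\ge k+\frac12\pot$ is preserved.
\item Type~\ref{red:hard}, $w\notin U$: nothing changes.
\end{itemize}

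The only step requiring care is the $\max(0,\cdot)$ truncation in the score. The point is that the hypothesis $\score_{G,\redc}(v)>0$, stated for the whole graph, transfers to every subobject containing the base vertex, because $\deg_{G[U]}\le\deg_G$. The other subtle point is the use of safety in Type~\ref{red:hard}, both for positivity and for keeping $\redc(w)\le k-1$. Beyond these, the argument is bookkeeping.
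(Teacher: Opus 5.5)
Your proof is correct and follows essentially the same route as the paper: a case analysis over the three reduction types, where safety of $w$ handles the only sets whose potential drops (those containing $w$ but not $v$) and also gives $\redc(w)\le k-2$, and where each subset's score is compared with $k+\frac12\pot(U)$. You argue directly rather than by contradiction, and you make explicit the non-truncation of $\max(0,\cdot)$ at the base vertices inside a subobject, which the paper uses without comment in the Type~3 case.
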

\begin{proof}
    Let $(G'=(V',E'),\redc')$ be the result of applying one of the reductions in a positive unobstructed object $(G=(V,E),\redc)$.
    If $(G',\redc')$ is a result of a reduction of Type~\ref{red:easy} or Type~\ref{red:tree}, then for every subset of vertices $U \subseteq V'$ we have $\pot_{G',\redc'}(U) \ge \pot_{G,\redc}(U) > 0$, so $(G',\redc')$ is positive.
    If $(G',\redc')$ is a result of a reduction of Type~\ref{red:easy}, then every subobject of $(G',\redc')$ is also a subobject of $(G,\redc)$, and we get that $(G',\redc')$ is unobstructed.

    Now, let $(G',\redc')$ be the result of a reduction of Type~\ref{red:tree} and base $\set{v_1,v_2}$.
    We need to show that $(G',\redc')$ is unobstructed.
    Assuming to the contrary, let the subobject spanned by $U \subseteq V'$ be an obstruction.
    We have $v_1,v_2 \in U$, as otherwise the subobject spanned by $U$ in $(G',\redc')$ would be the same as the subobject spanned by $U$ in $(G,\redc)$.
    Thus, $\redc'[U] = \redc[U]$, while $G'[U]$ and $G[U]$ differ by exactly one edge $\set{v_1,v_2}$.
    The total potential of $(G'[U],\redc'[U])$ is higher by two than the total potential of $(G[U],\redc[U])$.
    The scores of vertices $v_1$ and $v_2$ in $(G'[U],\redc'[U])$ are one higher than in $(G[U],\redc[U])$.
    The scores of other vertices in $U\setminus\set{v_1,v_2}$ are the same in both subobjects.
 As the subobject $(G[U],\redc[U])$ spanned by $U$ in $(G,\redc)$ is not an obstruction, we get:
    \begin{align*}
        \score(G'[U],\redc'[U]) = \score(G[U],\redc[U])+2 \ge k + \frac{1}{2}\Pot(G[U],\redc[U])+2 = k + \frac{1}{2}\Pot(G'[U],\redc'[U])+1\text{,}
    \end{align*}
    and $(G'[U],\redc'[U])$ is not an obstruction -- a contradiction.

    Now, for the remaining case, let $(G',\redc')$ be the result of applying a reduction of Type~\ref{red:hard} that removes from $G$ an edge $\set{v,w}$ between a vertex~$v$ and a safe neighbor $w$ of $v$, and increases the red-count of $w$.
    Notice that we have $\redc(w)\le k-2$, as otherwise the subset $\set{w}$ would be dangerous and $w$ would not be a safe neighbor of $v$.
    The score of~$v$ is positive in~$(G,\redc)$.
    The reduction increases the score of $v$ by one and does not change the score of $w$ or any other vertex.
    The potential of any subset that includes $w$ but does not include $v$ decreases by $2$.
    As $w$ is a safe neighbor of $v$ in $(G,\redc)$, the potential of any such set is at least $4$ in $(G,\redc)$ and remains positive in $(G',\redc')$.
    The potential of any other subset does not change, so $(G',\redc')$ is positive.
    It remains to show that $(G',\redc')$ is unobstructed.
    Assuming to the contrary, let the subobject spanned by $U \subseteq V'$ be an obstruction.
    The score of $x$ in $G'[U]$ is at least the same as in $G[U]$ except possibly when $U\ni x=w$ and $v \notin U$. In this case, the potential of $U$ in $(G,\redc)$ is $2$ greater than in $(G',\redc')$. Thus, we would get that $(G[U],\redc[U])$ is also an obstruction -- a contradiction.
    Otherwise, when $w \notin U$ or $v \in U$, we have $\score(G'[U],\redc'[U]) \ge \score(G[U],\redc[U])$ and the potential of $U$ in $(G',\redc')$ is the same as in $(G,\redc)$, so we would again get that $(G[U],\redc[U])$ is also an obstruction -- a contradiction.
    
    Thus, in each possible case, we have that $(G',\redc')$ is positive and unobstructed.
\end{proof}

Having defined all the necessary notions, let us briefly return to the class of objects we consider. 
Positive and unobstructed objects form a natural class that contains all internal $k$-arboric objects, is closed under the reductions, and is well suited for induction -- since, as we will show, every such object reduces to a smaller one. Moreover, in a certain sense, this is the largest class with these properties. Indeed, as the next example shows, the presence of even a single obstruction in a class makes it possible to construct positive objects that do not admit any reduction.

\begin{example}\label{exm:obstruction_no_reduction}
Let $(G=(V,E),\redc)$ be a positive object of total potential $2$, which is an obstruction. 
Let $v_1,\dots,v_k$ be a list of $k$ vertices such that any $v\in V$ appears on the list at least $\score_{G,\redc}(v)$ times.
Consider an object $(G',\redc')$ constructed from $k+1$ copies of $(G,\redc)$ and additional vertices $w_1,\dots,w_k$ by joining with a single edge each $w_i$ to $v_i$ in all copies of $(G,\redc)$. Then,
the object $(G',\redc')$ is positive (with total potential $2$) obstructed (by $(G,\redc)$), but its total rank is $0$.
\end{example}

Therefore, the unobstructed condition should be viewed not as a merely technical assumption, but as an intrinsic requirement of our induction scheme. It ensures that every subobject has sufficient capacity for the reduction process.

We now turn to the proof. We first show that if a positive object $(G,\redc)$ has a dangerous subset $A$ whose neighbors are all safe, then every external edge from $A$ in $G$ decreases, by at most one, the rank in $G$ of an independent set of reductions of the subobject spanned by $A$. It is a natural bound, as every external edge from $A$ changes the score of at most one vertex in $A$ by decreasing it by one.

\begin{lemma}\label{lem:safe_stabs}
    Let $(G=(V,E),\redc)$ be a positive object and let $A\subset V$ be a dangerous subset of $(G,\redc)$.\newline
    Let $\rho = \rank(G[A],\redc[A])$ be the total rank of the subobject spanned by $A$.
    If for every edge $\set{a,w} \in E$ with $a \in A$ and $w \notin A$ we have that $w$ is a safe neighbor of $a$ in $(G,\redc)$, then
    there exists an independent set $\Psi$ of reductions in $(G,\redc)$ with the base of every reduction in $\Psi$ contained in $A$ and the total rank of reductions in $\Psi$ equal at least $\rho - \norm{E[A,V\setminus A]}$.
\end{lemma}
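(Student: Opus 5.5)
The plan is to start from an optimal independent set of reductions in the subobject and lift each reduction to $(G,\redc)$, losing at most the number of external edges at its base. Let $\Psi_A$ be an independent set of reductions in $(G[A],\redc[A])$ whose total rank is $\rho$. For $a\in A$ write $e(a)$ for the number of edges of $E[A,V\setminus A]$ incident to $a$, so that $\sum_{a\in A}e(a)=\norm{E[A,V\setminus A]}$. Then
\[
\deg_G(a)=\deg_{G[A]}(a)+e(a),\qquad \score_{G,\redc}(a)=\max\bigl(0,\score_{G[A],\redc[A]}(a)-e(a)\bigr)\ge \score_{G[A],\redc[A]}(a)-e(a).
\]
We build $\Psi$ by replacing each reduction of $\Psi_A$ by a reduction of $(G,\redc)$ whose base is contained in the old base, or by nothing. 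Independence of $\Psi$ is then automatic. It suffices to show that the rank lost at each reduction is at most $\sum e(a)$ over its base.

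\textbf{Step 1: safeness inside $A$ is the same in both objects.} For $U\subseteq A$ we have $\pot_{G,\redc}(U)=\pot_{G[A],\redc[A]}(U)$. Hence every dangerous subset of $(G[A],\redc[A])$, being a proper subset of $A$ with potential $2$, is dangerous in $(G,\redc)$.

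Conversely, let $X$ be dangerous in $(G,\redc)$ with $X\cap A\neq\emptyset$. By \cref{obs:dangerous}, $X\cap A$ is dangerous in $(G,\redc)$ or equals $V$; since $A\subsetneq V$, it is either $A$ itself or a proper subset of $A$ of potential $2$, which is dangerous in $(G[A],\redc[A])$.

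Now take an edge $\set{v,w}$ inside $A$. If $w$ is safe for $v$ in $G[A]$ and $X\ni w$ is dangerous in $G$, then $X\cap A$ is either $A$ or dangerous in $G[A]$, and in both cases it contains $v$. If $w$ is dangerous for $v$ in $G[A]$, the witnessing set is dangerous in $G$. So for edges inside $A$ the two notions of safe neighbor coincide. By hypothesis, every external neighbor of $a\in A$ is a safe neighbor of $a$ in $(G,\redc)$.

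\textbf{Step 2: case analysis on each reduction of $\Psi_A$.} The recurring device is: if $e(a)>0$ and $\deg_G(a)>k$, then $a$ has a safe neighbor (an external one). So whenever $\score_{G,\redc}(a)>0$, $a$ is the base of a Type~\ref{red:hard} reduction in $(G,\redc)$ of rank at least $\score_{G[A],\redc[A]}(a)-e(a)$. If instead $\score_{G,\redc}(a)=0$, then $\score_{G[A],\redc[A]}(a)\le e(a)$, and dropping the reduction loses at most $e(a)$.

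For Type~\ref{red:easy} with base $v$, there are two cases. If $\deg_G(v)\le k$, keep it as Type~\ref{red:easy}. Otherwise $e(v)>0$ and the device applies.

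For Type~\ref{red:hard} with base $v$ and safe neighbor $w\in A$, by Step 1 $w$ is still safe in $G$, and $\deg_G(v)>k$. Keep it as Type~\ref{red:hard} if $\score_{G,\redc}(v)>0$, and drop it otherwise.

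For Type~\ref{red:tree} with base $\set{v_1,v_2}$, there are again two cases. If $e(v_1)=e(v_2)=0$, then degrees, scores and neighborhoods of $v_1,v_2$ are unchanged. By Step 1 neither has a safe neighbor in $G$, so it remains a Type~\ref{red:tree} reduction of the same rank. If say $e(v_1)>0$, apply the device to $v_1$ alone. The old rank $\min(\score(v_1),\score(v_2))$ is at most $\score_{G[A],\redc[A]}(v_1)$, so the loss is at most $e(v_1)$.

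Summing over $\Psi_A$ gives total rank at least $\rho-\norm{E[A,V\setminus A]}$.

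The main obstacle is Step 1, in particular the direction showing that dangerous sets of $G$ restrict to dangerous sets of $G[A]$. This is where $A$ being dangerous and $(G,\redc)$ being positive are used, via \cref{obs:dangerous}. It is also the point needing care when $X\cap A=A$.
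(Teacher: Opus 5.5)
Your proof is correct and follows essentially the same route as the paper: you lift each reduction of a maximum-rank independent set in $(G[A],\redc[A])$ to a reduction in $(G,\redc)$ whose base lies inside the old base. The two ingredients are the same as in the paper. First, by \cref{obs:dangerous}, a dangerous set of $(G,\redc)$ meeting $A$ restricts to a dangerous set of the subobject, so safe neighbors inside $A$ are preserved. Second, an external neighbor always serves as a safe neighbor for a Type~\ref{red:hard} reduction, so the loss at each reduction is at most the number of external edges at its base. Your explicit proof of both directions of the safe-neighbor correspondence is a small clarification: the paper states only the nontrivial direction, and leaves implicit the trivial converse that is needed for Type~\ref{red:tree} reductions with no external edges.
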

\begin{proof}
    Observe that if a vertex $a \in A$ has score $s$ in $(G[A],\redc[A])$ and has $m$ neighbors in $V \setminus A$, then the score of $a$ in $(G,\redc)$ is $\max(0,s-m)$.
    Thus, the total score of vertices in the bases of reductions in $(G[A],\redc[A])$ can decrease by at most $\norm{E[A,V\setminus A]}$ when we consider the whole object $(G,\redc)$ instead of the subobject $(G[A],\redc[A])$. However, one has to check that vertices with this total score indeed lead to the corresponding reductions. The intuition is that the safe neighbors in $V\setminus A$ decrease the score, but also allow to easily construct reductions of Type~\ref{red:hard}.

    Let $\Psi = \set{\psi_1,\psi_2,\ldots,\psi_j}$ be an independent set of reductions of total rank $\rho$ in $(G[A],\redc[A])$.
    For any $i=1,2,\ldots,j$, let $\rho_i$ be the rank of the reduction $\psi_i$ in $(G[A],\redc[A])$.
    Let $m_i$ be the number of edges in $E$ with one vertex in $V \setminus A$, and the second end in the base of $\psi_i$.
    Assume $\rho_i > m_i$.
    Then, the score of any vertex $v$ in the base of $\psi_i$ is at least $\rho_i$ in $(G[A],\redc[A])$ and at least $\rho_i-m_i > 0$ in~$(G,\redc)$.
    Observe that for any edge $\set{a,v}$ with $a \in A$, if $v$ is a dangerous neighbor of $a$ in $(G,\redc)$, then $v \in A$ (by the lemma's assumptions) and there exists a dangerous subset $W \subsetneq V$ with $v \in W$ and $a \notin W$.
    As $A$ is dangerous, by \cref{obs:dangerous}, $W\cap A$ is also a dangerous subset in $(G[A],\redc[A])$ and thus $v$ is a dangerous neighbor of $a$ in $(G[A],\redc[A])$ as well.
    Therefore, if $m_i=0$, then the reduction $\psi_i$ is also a reduction of the same type, base, and rank in $(G,\redc)$.
    If $m_i\ge 1$ and $\psi_i$ is a reduction of Type~\ref{red:tree}, then let $a$ be one of the vertices in the base of $\psi_i$ with at least one neighbor in~$V \setminus A$.
    We get that $a$ has a safe neighbor in $(G,\redc)$ and is the base of a reduction of Type~\ref{red:hard} and the score of $a$ is at least $\rho_i-m_i$ in $(G,\redc)$.
    If $m_i\ge 1$ and $\psi_i$ is of Type~\ref{red:easy} or Type~\ref{red:hard}, then let $a$ be the vertex in the base of $\psi_i$.
    We have that $a$ has positive score and a safe neighbor in $(G,\redc)$.
    Thus, vertex $a$ is the base of a reduction of Type~\ref{red:easy} or Type~\ref{red:hard} and the score of $a$ is $\rho_i-m_i$ in $(G,\redc)$.

    We get that for every $i=1,2,\ldots,j$ with $\rho_i > m_i$, there is a reduction in $(G,\redc)$ with the base contained in the base of $\psi_i$ and rank at least $\rho_i - m_i$.
    As $\sum_{i=1}^{j} \rho_i = \rho$ and $\sum_{i=1}^{j} m_i \le \norm{E[A,V\setminus A]}$,
    the statement of the lemma follows by summing up ranks of all reductions obtained this way for $i=1,2,\ldots,j$.
\end{proof}

For the proof of \cref{lem:main} it would suffice to show that every non-empty positive unobstructed object has at least one reduction. However, to make the induction work, we require the following stronger statement.

\begin{lemma}\label{lem:friendly_high_rank}
    Let~$(G,\redc)$ be a non-empty positive unobstructed object, then
    \[
        \rank(G,\redc) \ge k+\frac{1}{2}\Pot(G,\redc)\text{.}
    \]
\end{lemma}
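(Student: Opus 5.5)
We argue by induction on the number of vertices, taking a minimal counterexample $(G,\redc)$. If every vertex of positive score is the base of a reduction of Type~\ref{red:easy} or Type~\ref{red:hard}, then the total rank equals the total score. Since $(G,\redc)$ is not an obstruction, this gives $\rank(G,\redc)\ge k+\frac12\Pot(G,\redc)$ and we are done.

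So we may assume some vertex $v$ has $\score(v)>0$, $\deg_G(v)>k$, and no safe neighbor. Fix one such neighbor $w$ of $v$. Since $w$ is a dangerous neighbor, there is a dangerous set $X$ with $w\in X$ and $v\notin X$. By \cref{obs:dangerous}, dangerous sets are closed under intersection and union whenever these are proper subsets. Hence we can choose a \emph{minimal} dangerous set $A$ among those containing some vertex but avoiding a vertex of which it is a dangerous neighbor.

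Our plan is to find a dangerous set $A$ all of whose outside neighbors are safe neighbors, in the sense required by \cref{lem:safe_stabs}. A natural candidate is a minimal dangerous set. Suppose $a\in A$ has an outside neighbor $u$ that is dangerous for $a$. Then some dangerous $W\ni u$ avoids $a$. By \cref{obs:dangerous}, either $W\cap A=\emptyset$ with at most one edge between them, or $A\cap W$ is a smaller dangerous set. In the second case the edges from $A\setminus W$ to $W\setminus A$ vanish, which constrains which neighbors can be dangerous. If instead we pick $A$ \emph{maximal}, the first case would make $A\cup W$ a larger dangerous set, or the whole of $V$. This case analysis is delicate, and I expect the correct choice is a maximal dangerous set, or a maximal one inside a suitable structure.

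Given such an $A$, we apply induction to the subobject $(G[A],\redc[A])$, which is positive and unobstructed with $\Pot=2$. Induction gives it rank at least $k+1$. By \cref{lem:safe_stabs}, we get an independent set of reductions inside $A$ of rank at least $k+1-|E[A,V\setminus A]|$.

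Next we handle the rest of the graph. Contract $A$, or else consider the subobject on $V\setminus A$ with red-counts increased to account for the edges to $A$. The potential computation gives
\[
\pot(V)=\pot(A)+\pot(V\setminus A)-2|E[A,V\setminus A]|=2+\pot(V\setminus A)-2|E[A,V\setminus A]|.
\]
Applying induction to $V\setminus A$, possibly with red-counts modified by the edges into $A$, yields rank at least $k+\frac12\pot(V\setminus A)$ on that side. Reductions based outside $A$ may be damaged by edges into $A$. A vertex of $V\setminus A$ adjacent to $A$ has its score lowered by one per edge, but those neighbors in $A$ are safe for it (if $A$ is maximal), which recreates Type~\ref{red:hard} reductions, mirroring \cref{lem:safe_stabs}. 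Summing the two parts gives
\[
(k+1-e)+(k+\tfrac12\pot(V\setminus A)-e)=2k+1+\tfrac12\Pot(G,\redc)-e+\ldots,
\]
with $e=|E[A,V\setminus A]|$. This must be balanced to reach $k+\frac12\Pot$, and the slack of $k+1$ pays for the boundary loss.

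The main obstacle is this bookkeeping across the cut. We must choose $A$ so that both sides' reductions remain valid, keep them independent (Type~\ref{red:tree} bases must not straddle the cut), and ensure the modified outside object is still positive and unobstructed so that induction applies. If a direct split fails, I would first try choosing $A$ as a maximal dangerous set not containing $v$, so that every edge leaving $A$ goes to a safe neighbor and the outside reductions survive.
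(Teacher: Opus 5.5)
Your first step is correct: if every positive-score vertex is the base of a Type~\ref{red:easy} or Type~\ref{red:hard} reduction, then rank equals score and unobstructedness finishes. The cut argument, however, has a genuine gap, and it begins with a reversed safety claim. If $A$ is dangerous, $u\notin A$, $a\in A$ and $\set{u,a}\in E$, then $a$ is \emph{always} a dangerous neighbor of $u$ (take $X=A$ in the definition), however $A$ is chosen. Maximality of $A$ gives only the opposite direction: $u$ is a safe neighbor of $a$, which is the hypothesis of \cref{lem:safe_stabs}.

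Even that opposite direction fails when $V$ is the union of $A$ and another dangerous set joined by a single edge. This bridge case needs a separate argument; the paper's may require a Type~\ref{red:tree} reduction on the bridge edge. Consequently, reductions based in $V\setminus A$ are not merely weakened by one per boundary edge. A vertex all of whose neighbors lie in $A$ has no Type~\ref{red:hard} reduction at all. With a single maximal $A$, an outside vertex may also have dangerous neighbors in another dangerous set inside $V\setminus A$.

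Inducting on the outside does not work either. Raising red-counts by the number of edges into $A$ can exceed $k-1$. The modified object on $V\setminus A$ has total potential $\Pot(G,\redc)-2$, which is $0$ when $\Pot(G,\redc)=2$, so it need not be positive. Even granting your estimates, the sum is $2k+\frac12\Pot(G,\redc)-\norm{E[A,V\setminus A]}$. This requires $\norm{E[A,V\setminus A]}\le k$, and the number of boundary edges is not bounded in terms of $k$.

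The missing idea is to avoid induction on the complement altogether and pay for the boundary by potential counting. After the disconnected case, the bridge case, and the case without dangerous sets, the paper greedily picks disjoint dangerous sets $A_1,\dots,A_p$, each inclusion-maximal in the complement of the previous ones. It shows there are no edges among them, and deduces that every dangerous neighbor of every vertex lies in $A=\bigcup_i A_i$. Induction plus \cref{lem:safe_stabs} is applied only inside each $A_i$, giving rank at least $(k+1)p-\norm{E[A,V\setminus A]}$.

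Outside $A$, the vertices split into three classes:
\begin{itemize}
\item $B$: positive score but no Type~\ref{red:easy} or Type~\ref{red:hard} reduction, hence degree at least $k+1$ with all neighbors in $A$;
\item $C$: each contributes its full score directly;
\item $D$: score $0$, so $\deg_G+\redc\ge 2k$.
\end{itemize}
Then $\pot_{G,\redc}(A\cup B)\ge 2$ forces $p\ge\norm{B}+1$. Also $\Pot(G,\redc)\ge 2$, together with the inequality for $D$, shows the remaining terms contribute at least $1$. Combining these yields $\rank(G,\redc)\ge\frac12\Pot(G,\redc)+(k-1)+1$.
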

\begin{proof}
    The proof is by induction on the ordering induced by the smaller than relation.
    For the base of the induction, consider an object $(G,\redc)$ with just one vertex $v$.
    There are no edges in $G$.
    Thus,
    \[
        \Pot(G,\redc) = \pot_{G,\redc}(\set{v})=2k-2\redc(v)\text{.}
    \]
    There is just one reduction of Type~\ref{red:easy} available in~$(G,\redc)$.
    The rank of this reduction is given by $\score_{G,\redc}(v) = 2k-\redc(v) = k+\frac{1}{2}\Pot(G,\redc)$.
    This concludes the proof for the base case.

    Now, let $(G=(V,E),\redc)$ be a positive unobstructed object with $\norm{V} \ge 2$. By the induction hypothesis, we have that every positive unobstructed object smaller than $(G,\redc)$ satisfies \cref{lem:friendly_high_rank}.
    We divide the proof into a series of claims, depending on the structure of the object~$(G,\redc)$.

    \begin{claim}\label{clm:connected}
        If $G$ is disconnected, then \cref{lem:friendly_high_rank} follows.
    \end{claim}
    \begin{claimproof}
        Let $X$, $Y$ be two non-empty disjoint subsets of vertices such that $V=X\sqcup Y$ and $E[X,Y]=\emptyset$.
        By the induction hypothesis applied to the subobjects spanned by $X$ and by $Y$, we get 
     \begin{align*}
            \rank(G,\redc) &= \rank(G[X],\redc[X])+\rank(G[Y],\redc[Y]) \geq \brac{k+\frac{1}{2}\pot_{G,\redc}(X)} + \brac{k+\frac{1}{2}\pot_{G,\redc}(Y)} \\ 
            &= 2k + \frac{1}{2}\pot_{G,\redc}(V) \geq k + \frac{1}{2}\pot_{G,\redc}(V)\text{.}
     \end{align*}
    \end{claimproof}
    For the rest of the proof, we can assume that $G$ is connected.
    In particular, the degree of each vertex is at least $1$.

    \begin{claim}\label{clm:bridge}
       If the vertex set $V$ can be partitioned into two disjoint dangerous subsets $X$ and $Y$, then \cref{lem:friendly_high_rank} follows.
    \end{claim}
    \begin{claimproof}
    	If there is no edge between $X$ and $Y$, then the claim follows from \cref{clm:connected}. Otherwise, there is at least one edge in $E[X,Y]$. And, by \cref{obs:dangerous}, there is at most one edge in $E[X,Y]$.
        Let $\set{x,y}$ be the only edge with $x\in X$ and $y\in Y$.
        Let $(G',\redc')$ and $(G'',\redc'')$ be the subobjects of~$(G,\redc)$ spanned by $X$ and by $Y$, respectively.
        We have $\Pot(G,\redc) = \Pot(G',\redc') = \Pot(G'',\redc'') = 2$.
        By the induction hypothesis, the total rank of $(G',\redc')$ and the total rank of $(G'',\redc'')$ are at least $k+1$ each.
        Let $\Psi'=\set{\psi'_1,\psi'_2,\ldots,\psi'_i}$ be an independent set of reductions in $(G',\redc')$ of total rank at least $k+1$.
        Similarly, let $\Psi''=\set{\psi''_1,\psi''_2,\ldots,\psi''_j}$ be an independent set of reductions in $(G'',\redc'')$ of total rank at least $k+1$.

        For a vertex $v \in X$, $v \neq x$, let $w$, if any, be a dangerous neighbor of $v$ in $(G,\redc)$.
        As $v \neq x$, we have $w \in X$.
        By the definition, there exists a dangerous subset $W$ in $(G,\redc)$ with $w\in W$, and $v \notin W$.
        By \cref{obs:dangerous}, the set $W\cap X$ is also dangerous in $(G,\redc)$ and in $(G',\redc')$.
        Thus, $w$ is a dangerous neighbor of $v$ in $(G',\redc')$.
        Similarly, for a vertex $v \in Y$, $v \neq y$ every neighbor $w$ of $v$ that is dangerous in $(G,\redc)$ is also dangerous in $(G'',\redc'')$.
        As a result, we get that every reduction $\psi'$ in $\Psi'$ with $x$ not in the base of $\psi'$ and every reduction $\psi''$ in $\Psi''$ with $y$ not in the base of $\psi''$ is also a reduction of the same type, base, and rank in $(G,\redc)$.

        Let $\rho'$ be the rank of the reduction $\psi'_x$ in $\Psi'$ with $x$ in the base of $\psi'_x$, or $\rho'=0$ if there is no such reduction.
        Let $\rho''$ be the rank of the reduction $\psi''_y$ in $\Psi''$ with $y$ in the base of $\psi''_y$, or $\rho''=0$ if there is no such reduction.
        We get that the total rank of reductions in $(G,\redc)$ with neither $x$ nor $y$ in the base is at least $2k+2-\rho'-\rho''$.
        We consider the case $\rho'+\rho''\geq k+2$, as otherwise the claim immediately follows.
        Without loss of generality, we may also assume that $\rho' \le \rho''$.
        
        If $\rho' > k$, then $\deg_{G'}(x) < k$ and $\deg_{G}(x) \le k$, and hence $x$ is the base of a reduction of Type~\ref{red:easy} in~$(G,\redc)$ with rank $\rho'-1 \ge k$.
        As $\rho'' \ge \rho' > k$, similarly $y$ is the base of a reduction of Type~\ref{red:easy} in $(G,\redc)$ with rank at least $k$.
        In this case, we get that the total rank of $(G,\redc)$ is at least $2k \ge k+1$.

        If $\rho' \le k$, then there is at least one reduction $\psi'$ in $\Psi'$ with the base of $\psi'$ contained in $X\setminus\set{x}$.
        If additionally $\rho'' > k$, then again $y$ is the base of a reduction of Type~\ref{red:easy} in $(G,\redc)$ with rank at least~$k$.
        Together with the reduction $\psi'$ we get that the total rank of $(G,\redc)$ is at least $k+1$.

        It remains to check the case $\rho' \le \rho'' \le k$, whence $\rho'-1\geq 1$, since $\rho'+\rho''\geq k+2$.
        In this case the total rank of reductions in $(G,\redc)$ with neither $x$ nor $y$ in the base is at least $2k+2 -\rho' - \rho'' \ge k+2 - \rho' = k+1 - (\rho'-1)$, so it remains to find one additional reduction of rank at least $\rho'-1$.
        Observe that $\score_{G,\redc}(x) \ge \rho'-1$ and $\score_{G,\redc}(y) \ge \rho''-1 \ge \rho'-1$.
        If $x$ or $y$ is the base of a reduction of Type~\ref{red:easy} or Type~\ref{red:hard} in $(G,\redc)$, then we are done.
        Otherwise, the edge $\set{x,y}$ is the base of a reduction of Type~\ref{red:tree} and rank at least $\rho'-1$.
    \end{claimproof}

    \begin{claim}\label{clm:dangerous}
	If there is no dangerous subset, then \cref{lem:friendly_high_rank} follows.
\end{claim}
\begin{claimproof}
	If there is no dangerous subset, then every neighbor of every vertex is a safe neighbor.
	Thus, each vertex~$v$ with positive score is a base of a reduction of Type~\ref{red:easy} or Type~\ref{red:hard} of rank equal to the score of $v$.
	Thus, the total rank of~$(G,\redc)$ equals the total score of~$(G,\redc)$.
	As $(G,\redc)$ is not an obstruction, we get that the total score, and the total rank, is at least $k + \frac{1}{2}\Pot(G,\redc)$.
\end{claimproof}

For the main part of the proof we assume that none of the previous claims applies. We construct a maximal sequence $A_1,A_2,\ldots,A_p$ of pairwise disjoint dangerous subsets of $V$ satisfying a condition that $A_{i+1}$ is inclusion maximal in $V\setminus \bigcup_{j=1}^{i}A_j$.

Let $A_1$ be some inclusion maximal dangerous subset in $(G,\redc)$.
The existence of $A_1$ is guaranteed, as \cref{clm:dangerous} does not apply.
If there are no dangerous subsets contained in $V\setminus A_1$, then $p=1$ and the construction of the sequence is finished. 

Otherwise, observe that for any dangerous subset $Y\subset V\setminus A_1$ there is no edge between $A_1$ and $Y$. Indeed, if there is an edge in $E[A_1,Y]$, then by \cref{obs:dangerous} there is exactly one edge in $E[A_1,Y]$ and $\pot_{G,\redc}(A_1\cup Y) = 2$. If $A_1\cup Y\subsetneq V$, then it is also dangerous and contradicts maximality of $A_1$. Thus, $A_1\cup Y = V$. This is also impossible, as \cref{clm:bridge} does not apply. Hence, every dangerous subset $Y$ contained in $V\setminus A_1$ satisfies $E[A_1,Y] = \emptyset$. Let $A_2$ be some inclusion maximal dangerous subset contained in $V\setminus A_1$. We have  $E[A_1,A_2]=\emptyset$.

Similarly, for any $i \ge 2$, let $A_{i+1}$ be some inclusion maximal dangerous subset in $V\setminus \bigcup_{j=1}^{i}A_j$.\newline If such a set does not exist, then $p=i$ and the construction of the sequence is finished. Otherwise, $A_{i+1}$ is a dangerous subset contained in~$V\setminus A_1$ and, as observed before, we have $E[A_1,A_{i+1}] = \emptyset$. Furthermore, for every $2\le j \le i$, we have $E[A_j,A_{i+1}] = \emptyset$, as otherwise the set $A_j \cup A_{i+1}$ would be a dangerous superset of $A_j$ contained in $V\setminus\bigcup_{s=1}^{j-1}A_{s}$, contradicting maximality of $A_j$.

The construction of the sequence eventually finishes for some $p\ge 1$ and gives a non-empty sequence $A_1,A_2,\ldots,A_p$ of pairwise disjoint dangerous subsets of $V$.
For all $1 \le i < j \le p$, we have $E[A_i,A_j]=\emptyset$.
Let $A = \bigcup_{i=1}^{p}A_i$.
If $p=1$, then $A=A_1\neq V$.
Otherwise, for $p\ge2$, sets in the sequence span a disconnected graph and, as \cref{clm:connected} does not apply, we also have $A\neq V$.

The set $A$ has a very important property -- every dangerous neighbor in $(G,\redc)$ is in $A$.
Indeed, let $v$ be any vertex in $V$ and $w$ be a dangerous neighbor of $v$.
Then, there exists a dangerous set $W$ with $w \in W$ and $v \notin W$.
The construction guarantees that $V\setminus A$ does not contain any dangerous subset, and thus $W\cap A \neq \emptyset$.
Let $A_i$ be the first set in the sequence with $W\cap A_i \neq \emptyset$.
If $i=1$, as $W\cup A_1$ has potential $2$, we have that $W \subseteq A_1$ or $W\cup A_1 = V$.
In the first case, we get $w \in A_1 \subseteq A$.
In the second case, as we have $A_1 \cap W \neq \emptyset$, then by \cref{obs:dangerous}, there are no edges in $E[A_1\setminus W,W\setminus A_1]$.
As $v \notin W$ and $\set{v,w}\in E$, we get $v \in A_1$ and $w \in A_1 \subseteq A$.
If, in turn, $i>1$, then by \cref{obs:dangerous}, $A_i \cup W$ is dangerous and disjoint from $\bigcup_{j=1}^{i-1}A_j$.
As $A_i$ is an inclusion maximal set with these properties, we get $W \subseteq A_i$, and therefore $w \in A_i \subset A$.

We define the following partition of the vertices remaining in $V\setminus A$:
    \begin{itemize}
        \item $B$ is the set of vertices in $V\setminus A$ with score at least $1$ that ARE NOT the base of a reduction of Type~\ref{red:easy} or Type~\ref{red:hard},
        \item $C$ is the set of vertices in $V\setminus A$ with score at least $1$ that ARE the base of a reduction of Type~\ref{red:easy} or Type~\ref{red:hard},
        \item $D$ is the set of vertices in $V\setminus A$ with score equal to $0$.
    \end{itemize}
For any two sets $U$, $W$ among the sets $A$, $B$, $C$, $D$ we use $UW$ to denote the set of edges $E[U,W]$.
For a vertex $b \in B$ we know that $b$ is not the base of a reduction of Type~\ref{red:easy}, hence $\deg_G(b) > k$. Moreover, $b$ is also not the base of a reduction of Type~\ref{red:hard}, so every neighbor of $b$ is dangerous. Therefore, every neighbor of $b$ is in $A$. Summing over all vertices in $B$ we obtain:
    \begin{align}\label{eq:AB}
        \norm{BB} = 0, \quad \norm{BC} = 0, \quad \norm{BD} = 0, \quad \norm{AB} \ge (k+1)\norm{B}\text{.}
    \end{align}
For a vertex $d \in D$ we have $\score_{G,\redc}(d) = 0$.
Thus, $2k - \deg_G(d) - \redc(d) \le 0$, and $\deg_G(d) + \redc(d) \ge 2k$.
Summing this inequality over all vertices $d\in D$ and using~(\ref{eq:AB}) for $\norm{BD} = 0$ we obtain:
    \begin{align}
        \label{eq:D}
      \norm{AD} + \norm{CD} + 2\norm{DD} + \redc(D) \ge 2k\norm{D}\text{.}
    \end{align}
Since $A$ is the union of $p$ dangerous subsets with no edges between them, we get that $\pot_{G,\redc}(A) = 2p$.
Now, the calculation of the potential of the set $A\cup B$ and inequality~(\ref{eq:AB}) give the following estimate:
    \begin{align}\label{eq:B}
      2 &\le \pot_{G,\redc}(A\cup B) = \pot_{G,\redc}(A) + \pot_{G,\redc}(B) - 2\norm{AB} \nonumber\\
      &= 2p + 2k\norm{B} - 2\redc(B) - 2\norm{BB} -2\norm{AB} \nonumber\\
      &\le 2p + 2k\norm{B} -2\norm{AB} \nonumber \nonumber\\ 
      &\le 2\bbrac{p + k\norm{B} - \norm{AB}} + 2\bbrac{\norm{AB} - (k+1)\norm{B}} 
      = 2\bbrac{p-\norm{B}}\text{.}
    \end{align}
Next, the calculation of the total potential using~(\ref{eq:AB}) and inequalities~(\ref{eq:D})~and~(\ref{eq:B}) gives
    \begin{align}\label{eq:total}
      2 &\le \Pot(G,\redc) = \pot_{G,\redc}(A\cup B) + \pot_{G,\redc}(C \cup D) - 2\norm{AC} - 2\norm{AD} \nonumber\\
      &\le 2\bbrac{p-\norm{B}+k\norm{C}-\redc(C)-\norm{CC}+k\norm{D}-\redc(D)-\norm{DD}- \norm{CD}-\norm{AC}-\norm{AD}} \nonumber\\
      &\quad + 2\bbrac{\redc(D)+\norm{AD}+\norm{CD}+2\norm{DD}-2k\norm{D}} \nonumber\\
      &= 2\bbrac{p-\norm{B}+k\norm{C}-\redc(C)-\norm{CC}- k\norm{D}+\norm{DD}-\norm{AC}}\text{.}
    \end{align}

By the induction hypothesis, for each $i=1,\ldots,p$ we get that $\rank(G[A_i],\redc[A_i]) \ge k+\frac{1}{2}\cdot 2=k+1$.
Since each edge that connects $A_i$ with the rest of the graph satisfies the conditions of \cref{lem:safe_stabs}, we conclude that there is an independent set of reductions in~$(G,\redc)$ of total rank $k+1 - \norm{E[A_i,V\setminus A_i]}$ with the bases contained in $A_i$.
Further, for each vertex $c \in C$, there is a reduction of Type~\ref{red:easy} or Type~\ref{red:hard} with the base $c$ and rank equal to $\score_{G,\redc}(c)$.
Combining these two observations with equations~(\ref{eq:AB}) and inequalities~(\ref{eq:B})~and~(\ref{eq:total}) we get the following estimate for the total rank of~$(G,\redc)$:
\begin{align*}
  \rank(G,\redc) &\ge (k+1)p - \norm{AB} - \norm{AC} - \norm{AD} + \score_{G,\redc}(C)\\ 
    & = (k+1)p - \norm{AB} - \norm{AC} - \norm{AD} + 2k\norm{C} - \redc(C) - \norm{AC} - 2\norm{CC} - \norm{CD}\\
    & = \frac{1}{2}{\Pot(G,\redc)}+kp-k\norm{B}+k\norm{C}-k\norm{D}+\redc(B)+\redc(D)-\norm{AC}-\norm{CC}+\norm{DD}\\
    & \ge \frac{1}{2}{\Pot(G,\redc)}+(k-1)(p-\norm{B})
    +\Big(p-\norm{B}+k\norm{C}-k\norm{D}-\redc(C)-\norm{AC}-\norm{CC}+\norm{DD}\Big)\\
    & \ge \frac{1}{2}{\Pot(G,\redc)}+(k-1)+1\text{,}
\end{align*}
and hence, the object $(G,\redc)$ satisfies \cref{lem:friendly_high_rank}.
\end{proof}

\begin{lemma}\label{lem:main}
    Every positive unobstructed object decomposes.
\end{lemma}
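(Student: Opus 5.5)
The plan is induction along the ``smaller than'' order, peeling off one reduction at a time. Let $(G,\redc)$ be a non-empty positive unobstructed object; the empty object decomposes trivially. By \cref{lem:friendly_high_rank} we have $\rank(G,\redc)\ge k+\frac12\Pot(G,\redc)\ge k+1>0$, so at least one reduction is available. Apply it. By \cref{lem:friendly_reductions} the result $(G',\redc')$ is a smaller positive unobstructed object, so by induction it has a decomposition $E'=E_1'\sqcup E_2'$ with $k$\nobreakdash-degenerate order $\ll'$. We then lift it to $(G,\redc)$, one case per reduction type.

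\textbf{Type~\ref{red:easy}.} Append the base $v$ at the end of $\ll'$ and put all its at most $k$ edges into $E_1$ (blue). Red degrees do not change, so the lift is immediate.

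\textbf{Type~\ref{red:hard}.} Put the removed edge $\set{v,w}$ into $E_2$. At $w$ this is paid for by the increment $\redc'(w)=\redc(w)+1$. At $v$, however, the red degree grows while $\redc(v)$ stays the same.

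\textbf{Type~\ref{red:tree}.} The same difficulty occurs at both endpoints.

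So the plain statement ``red degree at $v$ is at most $k-1-\redc(v)$'' is not self-reproducing under the induction, and I will prove a strengthened invariant instead.

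The natural quantity to track is $\deg_G(v)+\redc(v)$. It never increases under any reduction. It drops by one at a base of Type~\ref{red:hard} or~\ref{red:tree} and at a neighbour of a removed Type~\ref{red:easy} vertex, and it stays fixed at the safe neighbour $w$ of a Type~\ref{red:hard} reduction. A vertex only acts as a base while $\deg_G(v)\ge k+1$ and $\deg_G(v)+\redc(v)\le 2k-1$. Consider the first moment $v$ is a base, with values $\deg_0$ and $\redc_0$. From then until the last time $v$ is a base, every red edge at $v$ lowers its degree, and at the last base moment the degree is still at least $k+1$. Hence the red edges acquired in that window number at most $\deg_0-k\le k-1-\redc_0$. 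I will therefore formulate the lemma with an additional bound for vertices of positive score and degree above $k$: their red degree in the decomposition is at most $\max(0,\deg_G(v)-k)$ beyond what is already accounted for in $\redc(v)$. For all other vertices the bound stays $k-1-\redc(v)$. The global bound $k-1-\redc(v)$ must also be kept, and it follows from the positive-score condition $\deg_G(v)\le 2k-1-\redc(v)$.

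I will then check that this combined invariant survives each reduction type. For a base $v$ of Type~\ref{red:hard} or~\ref{red:tree}, the degree drops by one, which exactly absorbs the new red edge. For Type~\ref{red:easy} nothing red changes. The delicate case, and the step I expect to be the main obstacle, is the safe neighbour $w$ of a Type~\ref{red:hard} reduction. Here $\redc(w)$ rises while $\deg_G(w)$ falls, and $w$ may itself have positive score, possibly with small degree. I will first try to resolve this by making the extra bound depend on $\deg_G(v)+\redc(v)-k$ instead of $\deg_G(v)-k$, since the former is unchanged at $w$. If an off-by-one remains, I will exploit that $w$ being safe forces $\redc(w)\le k-2$ and that red edges charged to $\redc$ are never double counted.

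Finally, unwinding the induction, every vertex ends with red degree at most $k-1-\redc(v)$, and the blue edges span a $k$\nobreakdash-degenerate graph along $\ll$. This is exactly a $(k,k-1)$\nobreakdash-decomposition of $(G,\redc)$.
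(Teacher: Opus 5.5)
There is a genuine gap. It sits exactly at the case you flagged as delicate, and neither of your proposed fixes closes it.

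\textbf{Why your induction fails as stated.} Your inductive step applies an arbitrary available reduction, and the lift is forced: the removed edge of a Type~\ref{red:hard} or Type~\ref{red:tree} reduction goes into $E_2$. With that freedom no invariant can work, because some reduction sequences produce invalid colourings.

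Take $k=2$, $G=K_4$ on $\set{v,x,y,z}$, and $\redc\equiv 0$. Every proper subset has potential at least $4$, so there are no dangerous sets. Hence Type~\ref{red:hard} with base $v$ and safe neighbour $x$ is available. Afterwards the only dangerous set is $\set{x}$, so $v$ is a safe neighbour of $y$, and Type~\ref{red:hard} with base $y$ and safe neighbour $v$ is available. Now $vx,vy\in E_2$, so $v$ has red degree $2>k-1$. Any invariant that lifts along every reduction and implies validity would have to certify this colouring, so no such invariant exists.

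\textbf{Where your specific invariant breaks.} Take a base $v$ with $\deg_G(v)=k+1$. In $G'$ it has degree $k$, where your invariant only gives red degree at most $k-1-\redc(v)$. The lift then gives $k-\redc(v)$, which exceeds both $\deg_G(v)-k=1$ and $k-1-\redc(v)$.

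Replacing $\deg_G(v)-k$ by $\deg_G(v)+\redc(v)-k$ does not help. At a base with $\redc(v)>0$, the lifted bound can be $k-1$ rather than $k-1-\redc(v)$.

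Your window count also stops too early. It ends at the last base moment and misses red edges that $v$ receives afterwards as a safe neighbour while its degree is at most $k$. Each such edge raises $\redc(v)$. But the $\deg_0-k$ window edges were never charged to $\redc(v)$, so the two contributions together can exceed the budget $k-1-\redc_1(v)$.

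\textbf{The missing idea.} Restrict the choice of reduction: always apply a Type~\ref{red:easy} reduction when one exists. In the example, $v$ already had degree $k$ before the second step, so this rule forbids that step. Under the rule, every vertex (including the safe neighbour $w$) has degree at least $k+1$ during any Type~\ref{red:hard} or Type~\ref{red:tree} reduction. Also, a vertex whose degree drops to $k$ is removed before it receives another red edge.

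With this rule your induction does go through, provided you sharpen the invariant to the following:
red degree $0$ if $\deg_G(v)\le k$;
red degree at most $\deg_G(v)-k$ if $\score_{G,\redc}(v)>0$ and $\deg_G(v)>k$;
red degree at most $k-1-\redc(v)$ if $\score_{G,\redc}(v)=0$.
One checks that all three reduction types lift this invariant.

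The paper avoids an invariant altogether. It fixes the whole reduction sequence under the Type~\ref{red:easy} preference and counts directly for each $v$: exactly $\redc_i(v)-\redc_1(v)$ red edges before its first base moment $i$, and at most $\deg_{H_i}(v)-k$ afterwards.
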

\begin{proof}
    Let $(G,\redc)$ be a positive unobstructed object.
    By \cref{lem:friendly_high_rank} we have that every non-empty positive unobstructed object has at least one reduction.
    By \cref{lem:friendly_reductions} we have that applying that reduction gives a smaller positive unobstructed object.
    We select a sequence of reductions that starts from the object $(G,\redc)$ and results in an empty object with an additional property that whenever we have a choice, we prefer to use a reduction of Type~\ref{red:easy} rather than a reduction of a different type.
    Let $(H_1,\redc_1), (H_2,\redc_2),\ldots,(H_s,\redc_s)$ be a sequence of objects such that $(H_1,\redc_1)=(G,\redc)$, and $(H_s,\redc_s)$ is the empty object.
    Further, for every $i=1,2,\ldots,s-1$, there is a reduction $\psi_i$ in $(H_{i},\redc_{i})$ with $(H_{i+1},\redc_{i+1})$ being the result of applying $\psi_i$ in $(H_i,\redc_i)$.
    As mentioned before, each reduction $\psi_i$ is not of Type~\ref{red:hard} or Type~\ref{red:tree} if there is a different reduction of Type~\ref{red:easy} available in $(H_i,\redc_i)$.

    Let~$\ll$ be the order on vertices given by the sequence of reductions and~$E_2$ be the set of edges colored red by reductions of Type~\ref{red:hard} and of Type~\ref{red:tree}.
    The definition of reductions guarantees that $\ll$ is a $k$\nobreakdash-degenerate order of $G\setminus E_2$.

    It remains to prove that for every vertex $v$ the degree of $v$ in $G_2=(V,E_2)$ is at most $k-1-\redc_1(v)$.
    If $v$ is not in the base of any reduction of Type~\ref{red:hard} or Type~\ref{red:tree} in the sequence and $v$ is removed from the graph $H_i$ in the sequence by a reduction $\psi_i$ of Type~\ref{red:easy}, then $v$ is incident to exactly $\redc_i(v)-\redc_1(v) \le k-1-\redc_1(v)$ red edges.
    Otherwise, let $i$ be the first index in the sequence where $v$ is in the base of a reduction $\psi_i$ of Type~\ref{red:hard} or Type~\ref{red:tree}.
    We have that the score of $v$ in $(H_i,\redc_i)$ is positive and equal to $2k-\redc_i(v)-\deg_{H_i}(v)$.
    Thus, $\redc_i(v)+\deg_{H_i}(v) \le 2k-1$.
    As $i$ is the first index where reduction of Type~\ref{red:hard} or Type~\ref{red:tree} is applied to $v$, we have that $\redc_i(v)-\redc_1(v)$ equals the number of edges incident to~$v$ colored red by reductions $\psi_1,\psi_2,\ldots,\psi_{i-1}$.
    Each time another edge incident to $v$ is colored red by reductions $\psi_i,\psi_{i+1},\ldots,\psi_{s-1}$, the degree of $v$ is decreased by at least one.
    If at some point the degree of $v$ drops to $k$, then $v$ is the base of a reduction of Type~\ref{red:easy}.
    The construction of the reduction sequence guarantees that from this point, no more edges incident to $v$ are colored red.
    Thus,
    \[
        \deg_{G_2}(v) \le \redc_i(v)-\redc_1(v)+(\deg_{H_i}(v) - k) \le 2k - 1 - \redc_1(v) - k = k-1-\redc_1(v)\text{.}
    \]
    We have shown that every vertex $v$ in the graph $G_2$ has degree at most $k-1-\redc_1(v)$, and the proof is finished.
\end{proof}

We are ready to recall and prove  \cref{thm:main}.
\thmmain*
\begin{proof}
Given a $k$\nobreakdash-arboric graph $G$, we define red-count function $\redc$ that assigns value $0$ to every vertex of $G$.
The object~$(G,\redc)$ is internal and $k$\nobreakdash-arboric.
By \cref{obs:arboric_positive} and \cref{obs:arboric_friendly} we have that $(G,\redc)$ is positive and unobstructed.
By \cref{lem:main} we have that $(G,\redc)$ decomposes.
The $(k,k-1)$\nobreakdash-decomposition of $(G,\redc)$ gives a $(k,k-1)$\nobreakdash-decomposition of $G$.
\end{proof}

\section{Decomposition type changes}\label{sec:super}

In this section we show that \cref{thm:main} can be generalized to the sufficient part of \cref{thm:super}. The following lemma allows to modify a $(d,h)$\nobreakdash-decomposition of a graph into a $(d+1,h-1)$\nobreakdash-decomposition. As a consequence, it allows to prove \cref{thm:super} by an application of \cref{thm:main} and repeated applications of \cref{lem:super}.

\lemsuper*
\begin{proof}
Let $G=(V,E)$ be a graph with a partition of the edge set $E = E_1 \sqcup E_2$ such that $G_1=(V,E_1)$ is $d$\nobreakdash-degenerate and $G_2=(V,E_2)$ is $h$\nobreakdash-bounded-degree.
Let $\ll$ be a $d$\nobreakdash-degenerate order of $G_1$.
As before, we say that the edges in $E_1$ are colored \emph{blue} and the edges in $E_2$ are colored \emph{red}.
	
We show a procedure that selects some red edges and colors them \emph{violet}.
Then we define $E_1'$ to be the set of blue and violet edges and $E_2'$ to be the set of red edges that are not violet.
Let $G_1'=(V,E_1')$ and $G_2'=(V,E_2')$ be the graphs induced by the new edge partition.
We prove that it is a $(d+1,h-1)$\nobreakdash-decomposition.
Moreover, the order $\ll$ which is a $d$\nobreakdash-degenerate order of $G_1$ is also a $(d+1)$\nobreakdash-degenerate order for $G_1'$.
The construction ensures that for every vertex $v \in V$ we have:
\begin{itemize}
\item Vertex~$v$ is incident to at most one violet edge that joins $v$ to a vertex that is before $v$ in $\ll$.
\item If $v$ has $h$ incident red edges in $E_2$, then $v$ is incident to at least one violet edge.
\end{itemize}
It is clear that these two conditions guarantee that the resulting graph $G_1'$ is $(d+1)$\nobreakdash-degenerate and that $G_2'$ is $(h-1)$\nobreakdash-bounded-degree.
	
We say that a vertex $v$ in $V$ is \emph{heavy} if $v$ has $h$ incident red edges and all these edges join $v$ to vertices that are after $v$ in $\ll$.
Every other vertex is \emph{light}.
Observe that no two heavy vertices are connected by a red edge.
Let $H$ be a bipartite graph spanned by the red edges incident to heavy vertices.
Every heavy vertex has degree $h$ in $H$, and every other vertex has degree at most $h$ in $H$ as red edges span an $h$\nobreakdash-bounded-degree graph.
By Hall's theorem, there is a matching $M$ in $H$ that saturates all heavy vertices.
	
We select the violet edges in two steps. 
First, we select all edges in the matching~$M$ to be violet.
This way we ensure that every heavy vertex is incident to a violet edge.
Next, for every light vertex $v$ that is incident to $h$ red edges, we have that at least one red edge joins $v$ to a vertex that is earlier in $\ll$.
If $v$ is not matched in $M$, then we select one such edge to be violet.
Obviously, we get that every vertex that is incident to $h$ red edges is incident to at least one violet edge.
Further, every vertex is incident to at most one violet edge that joins the vertex to a vertex that is earlier in $\ll$.
As the violet edges satisfy the two conditions mentioned above, we get that the resulting partition of edges is a $(d+1,h-1)$\nobreakdash-decomposition of $G$.
\end{proof}

Observe that in the proof of \cref{lem:super} the matching $M$ can be found in time $\Oh{m^{1+\oh{1}}}$~\cite{ChenKLPGS22,ChenKLPGS23}, and the second step can easily be implemented to run in linear time. Thus the procedure of \cref{lem:super} runs in time $\Oh{m^{1+\oh{1}}}$.

Equipped with \cref{lem:super}, we are now ready to recall and prove \cref{thm:super}.

\thmsuper*
\begin{proof}
	The inequality $d \ge k$ is necessary by \cref{exm:d}, and the inequality $d+h \ge 2k-1$  is necessary by \cref{exm:dh}. For sufficiency, observe first that every $(d,h-1)$\nobreakdash-decomposition and every $(d-1,h)$\nobreakdash-decomposition is also a $(d,h)$\nobreakdash-decomposition. Thus, it is enough to prove the theorem for $d+h=2k-1$.
	Let $G$ be a $k$\nobreakdash-arboric graph.
	As $d \ge k$ and $d+h = 2k-1$, we have that $h \le k-1$.
	By \cref{thm:main}, we have a $(k,k-1)$\nobreakdash-decomposition of $G$.
	By $k-1-h$ applications of \cref{lem:super}, we can transform the $(k,k-1)$\nobreakdash-decomposition first into a $(k+1,k-2)$\nobreakdash-decomposition, then into a $(k+2,k-3)$\nobreakdash-decomposition, and so on.
	As $k+(k-1-h)=2k-1-h=d$ and $k-1-(k-1-h)=h$, after $k-1-h$ applications we obtain a $(d,h)$\nobreakdash-decomposition of~$G$.
\end{proof}

\section{Algorithm}\label{sec:algorithm}
The main result of this section is \cref{lem:algo_decomposition} which shows that \cref{lem:main} can be implemented to run in polynomial time.
Quite unexpectedly, the algorithm is simple, as it does not require any insight into the proof of \cref{lem:friendly_high_rank}.

Fix an integer $k \ge 1$ and an object $(G,\redc)$ with $G=(V,E)$, $n=\norm{V}$, and $m=\norm{E}$.
Recall that the object~$(G,\redc)$ is positive when the potential of every non-empty subset of vertices is at least $2$. This can be rewritten as
\begin{align}
    \forall_{\emptyset \neq U \subseteq V}\,2\cdot\norm{E[U]} \le 2k\cdot\norm{U} - 2\redc(U)- 2\text{.} \label{eq:positive}
\end{align}
The condition must hold for every non-empty subset $U$.
Following the approach of Picard and Queyranne~\cite{PicardQ1982}, we give an equivalent formulation branching on the choice of a vertex $u\in U$:
\begin{align}
    \forall_{u \in V}\,\forall_{U \subseteq V, u \in U}\,2\cdot\norm{E[U]} \le 2k\cdot\norm{U} - 2\redc(U)- 2\text{.} \label{eq:positive2}
\end{align}

Now, let us define a network flow problem that follows the flow formulation for bipartite matchings and will be used as a subproblem in the algorithm.
Based on $k$, $G$, and $\redc$, we construct a four layer network $\mathcal{N}_{k,G,\redc}$.
In the first layer, we have a single source node $s$.
In the second layer, we have a single node $e$ for each edge $e \in E$.
We connect the source node $s$ with every node $e$ in the second layer using a directed link of capacity $2$.
In the third layer, we have a single node $v$ for each vertex $v \in V$.
We connect every node $e=\set{v,w}$ in the second layer with each of the nodes $v$ and $w$ using a directed link of infinite capacity.
Finally, in the fourth layer, there is a single sink node~$t$.
We connect each node $v$ in the third layer with the sink node using a directed link of capacity $2k-2\redc(v)$. Notice that this capacity (by the definition of an object) is positive. 
It is an easy observation that $\mathcal{N}_{k,G,\redc}$ admits a flow of size $2\cdot \norm{E}$ if and only if the following condition holds:
\[
    \forall_{U \subseteq V}\,2\cdot \norm{E[U]} \le 2k\cdot\norm{U} - 2\cdot\redc(U)\text{.}
\]
This condition is very similar to the condition for $(G,\redc)$ to be a positive object.
We can exploit this similarity to decide if a given object is positive.


For the first application of the network flow approach, let $\mathcal{N}'_{k,G,\redc}$ be the network obtained from $\mathcal{N}_{k,G,\redc}$ by increasing the capacity of every link connecting the source node with a node in the second layer from $2$ to $2+\frac{1}{m+1}$.
Then, $\mathcal{N}'_{k,G,\redc}$ admits a flow of size $\brac{2+\frac{1}{m+1}}\cdot \norm{E}$ if and only if the following holds:
\[
\forall_{U \subseteq V}\,\brac{2+\frac{1}{m+1}}\cdot \norm{E[U]} \le 2k\cdot\norm{U} - 2\cdot\redc(U)\text{.}
\]
As $2\cdot\norm{E[U]}$, $2k\cdot\norm{U}$, and $2\cdot\redc(U)$ are even integers, and 
\[
2\cdot\norm{E[U]} < \brac{2+\frac{1}{m+1}}\cdot \norm{E[U]} < 2\cdot\norm{E[U]} + 1
\] 
whenever $E[U]\neq\emptyset$, we get that $\mathcal{N}'_{k,G,\redc}$ admits a flow of size $\brac{2+\frac{1}{m+1}}\cdot \norm{E}$ if and only if
\[
\forall_{\emptyset \neq U \subseteq V}\,2\cdot \norm{E[U]}+2 \le 2k\cdot\norm{U} - 2\cdot\redc(U)\text{.}
\]
This is exactly condition~(\ref{eq:positive}).
Using the recently developed algorithms for maximum flow~\cite{ChenKLPGS22,ChenKLPGS23}, we can check if $\mathcal{N}'_{k,G,\redc}$ admits a flow of size $\brac{2+\frac{1}{m+1}}\cdot \norm{E}$ in time $\Oh{m^{1+\oh{1}}}$ and we get the following.
\defdecproblem
{\DPpositive}
{Integer $k \ge 1$ and an object~$(G,\redc)$}
{Is $(G,\redc)$ a positive object?}
\begin{observation}\label{obs:algo_positive}
    There is an algorithm that, for an integer $k \ge 1$ and an object $(G,\redc)$,
    decides $\DPpositive \langle k,G,\redc \rangle$ in time $\Oh{m^{1+\oh{1}}}$.
\end{observation}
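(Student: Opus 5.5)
The statement to prove is \cref{obs:algo_positive}. Almost all of the work is already done in the discussion just before it. The plan has three steps: confirm the flow/cut correspondence for $\mathcal{N}'_{k,G,\redc}$, bound the size of the network, and invoke a near-linear maximum flow algorithm.

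\textbf{Step 1: the cut characterisation.} I will justify the "easy observation" by max-flow/min-cut. Consider an $s$--$t$ cut of finite capacity. Suppose an edge node $e=\set{v,w}$ lies on the source side. Because the links $e\to v$ and $e\to w$ have infinite capacity, both $v$ and $w$ must also lie on the source side.

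So let $U$ be the set of vertex nodes on the source side, and let $F$ be the set of edge nodes on the sink side. Then $F$ must contain every edge not in $E[U]$. The cheapest choice puts exactly the edges of $E[U]$ on the source side, and the cut capacity is
\[
\brac{2+\tfrac{1}{m+1}}\brac{\norm{E}-\norm{E[U]}} + \sum_{u\in U}\brac{2k-2\redc(u)}.
\]
This is at least $\brac{2+\tfrac{1}{m+1}}\norm{E}$ exactly when $\brac{2+\tfrac{1}{m+1}}\norm{E[U]}\le 2k\norm{U}-2\redc(U)$. Hence the maximum flow saturates all source links if and only if this inequality holds for every $U$.

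\textbf{Step 2: the parity argument.} Next I use the parity argument already in the text. If $E[U]\neq\emptyset$, the left side lies strictly between the even integer $2\norm{E[U]}$ and $2\norm{E[U]}+1$. The right side is even, so the inequality is equivalent to $2\norm{E[U]}+2\le 2k\norm{U}-2\redc(U)$.

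This leaves the case $E[U]=\emptyset$ with $U\neq\emptyset$. I must check that condition~(\ref{eq:positive}) holds there automatically, since the flow condition says nothing about it. Indeed, $2k\norm{U}-2\redc(U)\ge 2\norm{U}\ge 2$ because $\redc(u)\le k-1$. The empty set is trivial.

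Together these show that a flow of value $\brac{2+\frac{1}{m+1}}m$ exists exactly when condition~(\ref{eq:positive}) holds, that is, exactly when $(G,\redc)$ is positive.

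\textbf{Step 3: running time.} The network has $m+n+2$ nodes and $3m+n$ links. All capacities are rationals with denominator $m+1$. Multiplying every capacity by $m+1$ turns them into integers bounded by $\Oh{km}$, and infinite capacities can be replaced by $2(m+1)+1$.

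Vertices of degree zero can be handled separately, since they trivially satisfy the condition. We may therefore assume $n\le 2m$. Running the algorithm of~\cite{ChenKLPGS22,ChenKLPGS23} then takes $\Oh{m^{1+\oh{1}}}$ time, where the polylogarithmic dependence on the capacity bound is absorbed into the $m^{\oh{1}}$ factor, treating $k$ as fixed or at most polynomial in $m$.

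The only delicate points are the treatment of sets $U$ that span no edges and the capacity scaling. Neither is a real obstacle.
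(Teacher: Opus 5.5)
Your proposal is correct and follows the paper's own argument: the same network $\mathcal{N}'_{k,G,\redc}$ with source capacities $2+\frac{1}{m+1}$, the same parity step, and the same almost-linear maximum flow algorithm of Chen et al. You also fill in details the paper leaves implicit: the min-cut justification of the flow characterisation, the edgeless nonempty sets $U$ (which the paper's parity remark silently skips), and the capacity scaling needed to apply an integral max-flow routine.
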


For the second application of the network flow approach, we give a different formulation based on condition~(\ref{eq:positive2}) that will be more useful for our algorithm.
Let $u \in V$ be any vertex, and $\mathcal{N}^{[u]}_{k,G,\redc}$ be the network obtained from $\mathcal{N}_{k,G,\redc}$ by decreasing the capacity of the link connecting the node~$u$ to the sink node $t$ by $2$.
As $\redc(u) \le k-1$, the capacity of that link in the resulting network is non-negative.
Then, $\mathcal{N}^{[u]}_{k,G,\redc}$ admits a flow of size $2\cdot \norm{E}$ if and only if the following two conditions hold:
\[
\begin{array}{l}
    \forall_{\emptyset \neq U \subseteq V}\,2\cdot \norm{E[U]} \le 2k\cdot\norm{U} - 2\cdot\redc(U)\text{,}\\
    \forall_{\emptyset \neq U \subseteq V, u \in U}\,2\cdot \norm{E[U]} \le 2k\cdot\norm{U} - 2\cdot\redc(U) - 2\text{.}
\end{array}
\]
This equivalence together with condition~(\ref{eq:positive2}) gives the following observation.
\begin{observation}\label{obs:algo_positive2}
    For an integer $k \ge 1$, an object $(G,\redc)$ with $G=(V,E)$ is positive if and only if for every vertex $v \in V$, the network $\mathcal{N}^{[v]}_{k,G,\redc}$ admits a flow of size $2\cdot \norm{E}$.
\end{observation}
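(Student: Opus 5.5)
The plan is to combine condition~(\ref{eq:positive2}) with a max-flow/min-cut analysis of $\mathcal{N}^{[v]}_{k,G,\redc}$, in the same way as the earlier analysis of $\mathcal{N}_{k,G,\redc}$. Condition~(\ref{eq:positive2}) already rewrites positivity as: for every $v\in V$ and every $U\ni v$ we have $2\norm{E[U]}\le 2k\norm{U}-2\redc(U)-2$. So it suffices to show that, for a fixed $v$, the network $\mathcal{N}^{[v]}_{k,G,\redc}$ admits a flow of size $2\norm{E}$ if and only if the two displayed conditions hold. Once that is done, I will check that the conjunction of these conditions over all $v$ is exactly positivity.

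For the equivalence, I will compute the minimum cut of $\mathcal{N}^{[v]}_{k,G,\redc}$. Take any finite $s$–$t$ cut and let $U$ be the set of third-layer nodes on the source side.
\begin{itemize}
\item Every edge node $e$ on the source side must have both endpoints in $U$, since the links from edge nodes to vertex nodes have infinite capacity.
\item Conversely, putting an edge node $e$ with $e\subseteq U$ on the source side saves the capacity $2$ of the link $s\to e$ and costs nothing.
\end{itemize}
Hence, for a fixed $U$, the cheapest cut has capacity
\[
2\norm{E}-2\norm{E[U]}+\sum_{u\in U}c(u),
\]
where $c(u)=2k-2\redc(u)$, except that $c(v)=2k-2\redc(v)-2$. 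Since the cut separating $s$ alone has capacity $2\norm{E}$, max-flow/min-cut gives: a flow of size $2\norm{E}$ exists if and only if $2\norm{E[U]}\le\sum_{u\in U}c(u)$ for every $U\subseteq V$. When $v\notin U$ this is the first displayed condition. When $v\in U$ it is the second. The case $U=\emptyset$ is trivial. For $U\ni v$ the second condition also implies the first, so the two conditions together are equivalent to the cut inequalities for all $U$.

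Finally I will assemble the statement. If $(G,\redc)$ is positive, then~(\ref{eq:positive}) holds for every non-empty $U$, and this implies both conditions for every $v$. So each $\mathcal{N}^{[v]}_{k,G,\redc}$ admits a flow of size $2\norm{E}$. Conversely, suppose every network admits such a flow. Then for each $v$ the second condition holds for all $U\ni v$, which is condition~(\ref{eq:positive2}), and this is equivalent to~(\ref{eq:positive}), i.e.\ to positivity. The nonnegativity of the capacity $2k-2\redc(v)-2$ was already noted from $\redc(v)\le k-1$.

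The only delicate step is the min-cut computation, specifically the argument that an optimal cut never places an edge node on the source side while one of its endpoints is on the sink side. This is exactly what the infinite capacities enforce, so I do not expect a real obstacle.
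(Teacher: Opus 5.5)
Your proposal is correct and follows the paper's route: the paper likewise asserts that $\mathcal{N}^{[v]}_{k,G,\redc}$ admits a flow of size $2\norm{E}$ exactly when the two displayed conditions hold, and then combines this with condition~(\ref{eq:positive2}). Your explicit min-cut computation, including the check that every non-empty $U$ contains some vertex $v$, fills in the step the paper leaves as an easy observation.
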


The goal of this section is to give an efficient algorithm that solves the following problem.
\defoptproblem
{\OPdecomposition}
{Integer $k \ge 1$ and a positive unobstructed object $(G,\redc)$}
{A $(k,k-1)$\nobreakdash-decomposition of $(G,\redc)$}
\begin{lemma}\label{lem:algo_decomposition}
    There is an algorithm that, for an integer $k \ge 1$ and a positive unobstructed object $(G,\redc)$ with $m$ edges,
    constructs $\OPdecomposition \langle k,G,\redc \rangle$ in time $\Oh{m^3}$.
\end{lemma}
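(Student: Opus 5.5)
We will simulate the proof of \cref{lem:main} directly: starting from $(G,\redc)$, we repeatedly find one available reduction in the current object, apply it, and record the colour of the removed edges and the position of the removed vertex in $\ll$. We always prefer a reduction of Type~\ref{red:easy} when one exists. By \cref{lem:friendly_reductions} every intermediate object stays positive and unobstructed. By \cref{lem:friendly_high_rank}, since $k+\frac12\Pot\ge k+1>0$, every non-empty intermediate object has at least one reduction. The proof of \cref{lem:main} then certifies that the recorded output is a $(k,k-1)$\nobreakdash-decomposition. Every reduction deletes a vertex or an edge, so there are at most $n+m=\Oh{m}$ steps, since $G$ may be assumed connected or handled per component. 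It therefore suffices to find one reduction in time $\Oh{m^2}$. We never need to compute the rank or the sets $A,B,C,D$; existence is guaranteed by the lemmas, and we only need to be able to \emph{recognize} a reduction.

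Checking Type~\ref{red:easy} is trivial: scan for a vertex of degree at most $k$. For Types~\ref{red:hard} and~\ref{red:tree} the only nontrivial ingredient is deciding whether a neighbor $w$ of $v$ is safe, i.e.\ whether there is a dangerous set containing $w$ but not $v$. We avoid testing this directly. Instead, for a candidate vertex $v$ with positive score and degree above $k$, and a neighbor $w$, we tentatively apply the Type~\ref{red:hard} operation: remove one copy of $\set{v,w}$ and increase $\redc(w)$ by one, provided $\redc(w)\le k-2$. We then test whether the result is positive using \cref{obs:algo_positive}, in time $\Oh{m^{1+\oh{1}}}$.

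The key claim is that the result is positive if and only if $w$ is a safe neighbor of $v$. The backward direction is in the proof of \cref{lem:friendly_reductions}. For the forward direction, take a dangerous $X$ with $w\in X$ and $v\notin X$. After the operation, the potential of $X$ drops by $2$ and becomes $0$, so positivity fails. The case $X=V$ is excluded because $v\notin X$.

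This test costs one flow computation per edge--endpoint pair, which gives $\Oh{m\cdot m^{1+\oh{1}}}$ per step and is too slow. We will instead organize it by vertex. Using \cref{obs:algo_positive2}, $w$ is safe for $v$ exactly when the network for the modified object admits full flow for every choice of special vertex. The relevant special vertex is $w$ itself: the only sets whose potential decreases are those containing $w$ and not $v$. So it is enough to check one network $\mathcal{N}^{[w]}$ for the modified object. That gives one flow per pair, $\Oh{m}$ pairs, and time $\Oh{m^{2+\oh{1}}}$ per step, which is slightly over budget.

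To reach $\Oh{m^3}$ overall I would not use fast max-flow from scratch. Instead I would maintain a maximum flow of $\mathcal{N}^{[v]}$ incrementally. For a fixed $v$, a neighbor $w$ is dangerous exactly when $w$ lies on the source side of a tight minimum cut, which one residual-graph search identifies, in time $\Oh{m}$ per vertex $v$. Across all $v$ this gives $\Oh{nm}\subseteq\Oh{m^2}$ per step.

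Once the safe and dangerous neighbors of all high-degree positive-score vertices are known, a Type~\ref{red:hard} reduction is any such vertex with a safe neighbor. A Type~\ref{red:tree} reduction is any edge between two such vertices that both lack safe neighbors. Both are found by a linear scan. Summed over $\Oh{m}$ steps, the total running time is $\Oh{m^3}$.

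The main obstacle is the precise characterization of safeness through a single flow or cut computation. The step to verify carefully is that the set of vertices reachable from $w$ in the residual network of $\mathcal{N}^{[v]}$ or $\mathcal{N}^{[w]}$ corresponds to the union of dangerous sets avoiding $v$. Here \cref{obs:dangerous} is what guarantees closure under union and intersection, so a unique maximal or minimal tight set exists and can be read off a residual graph. If this proves delicate, the fallback is the per-pair positivity test above, which is correct and still polynomial, at the cost of a weaker exponent.
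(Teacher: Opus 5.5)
Your correctness argument and per-pair safeness test match the paper's. You simulate \cref{lem:main} preferring Type~\ref{red:easy}. You note that $w$ is safe for $v$ if and only if deleting $\set{v,w}$ and raising $\redc(w)$ keeps the object positive, and that only $\mathcal{N}^{[w]}$ of the modified object needs checking. But this yields only $\Oh{m^{3+\oh{1}}}$, and the step you use to reach $\Oh{m^3}$ is false.

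In a positive object the maximum flow of $\mathcal{N}^{[v]}$ is $2\norm{E}$, and its minimum cuts with non-empty vertex side correspond exactly to sets $U\ni v$ with $\pot_{G,\redc}(U)=2$. So ``$w$ lies on the source side of a tight minimum cut'' means some potential-$2$ set contains both $v$ and $w$. Being a dangerous neighbor instead requires a potential-$2$ set containing $w$ and avoiding $v$.

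For $k=2$, a single edge $vw$ with $\redc(v)=0$, $\redc(w)=1$ is positive, and $\set{w}$ is dangerous. No potential-$2$ set contains $v$, so your criterion calls $w$ safe. Conversely, a safe $w$ that lies in some dangerous set (which must contain $v$) is called dangerous.

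Your closing remark does not rescue this. Dangerous sets avoiding $v$ are not closed under union: two disjoint ones with no edge between them have potential $4$. So there is no maximal tight set to read off. Also, ``maintain a maximum flow incrementally'' is asserted without any reason why updates are cheap.

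The missing idea is a warm start that needs only $\Oh{1}$ augmenting paths. The paper keeps one integral maximum flow of $\mathcal{N}_{k,H_i,\redc_i}$ throughout the run. An edge deletion removes a node and lowers the flow by $2$. A red-count increase lowers one sink capacity by $2$. After cancelling at most $2$ units, positivity of the new object (\cref{lem:friendly_reductions}) guarantees that at most two augmentations, each costing $\Oh{m}$, restore a flow saturating the source.

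For a query $(v,w)$ one copies this flow, deletes node $\set{v,w}$, and lowers the capacity of $w\to t$ by $4$. Again only constantly many augmentations are needed. That gives $\Oh{m}$ per query, and $\Oh{m^2}$ queries in total give $\Oh{m^3}$.

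Your residual-graph idea can be salvaged, but indexed by $w$ rather than $v$. Take a maximum flow of $\mathcal{N}^{[w]}$, obtained by the same warm start. Either $t$ is reachable from $w$ in the residual graph and no potential-$2$ set contains $w$, or the reachable vertex nodes form the inclusion-minimal potential-$2$ set $T_w\ni w$. Then $w$ is a dangerous neighbor of exactly those $v\notin T_w$. This version still relies on the warm start to obtain each flow in $\Oh{m}$.
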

\begin{proof}
Our proof of \cref{lem:main} gives the following algorithm to construct a $(k,k-1)$\nobreakdash-decomposition.
For any positive unobstructed object~$(G,\redc)$ on input, \cref{lem:friendly_high_rank} guarantees that there is at least one reduction available in $(G,\redc)$.
Algorithm finds and applies any reduction of Type~\ref{red:easy} if it exists.
Otherwise, Algorithm finds and applies any reduction of Type~\ref{red:hard} if it exists.
Otherwise, Algorithm finds and applies any reduction of Type~\ref{red:tree} which is guaranteed to exist.
After applying the reduction, we get a smaller positive unobstructed object, and we can repeat the process until we get an empty object.
Let $(H_1,\redc_1), (H_2,\redc_2),\ldots,(H_s,\redc_s)$ be a sequence of consecutive objects where $(H_1,\redc_1)=(G,\redc)$, and each $(H_{i+1},\redc_{i+1})$ is constructed from $(H_i,\redc_i)$ by the application of a reduction selected by the algorithm.
Let $m_i$ denote the number of edges in $H_i$ for every $i=1,2,\ldots,s$.
The algorithm terminates when it reaches the empty object $(H_s,\redc_s)$.

Finding a reduction of Type~\ref{red:easy} is easy as it requires only a simple check for the degree of each vertex.
Finding a reduction of Type~\ref{red:hard} is more involved, as it requires deciding if a given neighbor is a safe neighbor or a dangerous neighbor.
Finding a reduction of Type~\ref{red:tree} is again easy, as we already know that there is no reduction of Type~\ref{red:hard} available, i.e., that every neighbor of a vertex with positive score is a dangerous neighbor.
It is easy to see that $w$ is a safe neighbor of $v$ if and only if the object obtained from $(G,\redc)$ by removing the edge $\set{v,w}$ and increasing the red-count of~$w$ by one is positive.
Thus, by \cref{obs:algo_positive}, we can check if $w$ is a safe neighbor of $v$ in time $\Oh{m^{1+\oh{1}}}$.
This approach gives an algorithm that runs in time $\Oh{m^{3+\oh{1}}}$.

In what follows, we exploit the decremental setting of our queries that allows us to perform every single check for a safe neighbor in time $\Oh{m}$.
To allow for efficient checks for a safe neighbor, we maintain, at every step~$i$, an integral maximum flow in the network $\mathcal{N}_{k,H_i,\redc_i}$; such a flow exists as all finite capacities in the network are integers.
Before the first step, we compute an integral maximum flow in $\mathcal{N}_{k,G,\redc}$ using the algorithm of~\cite{ChenKLPGS22,ChenKLPGS23} in time $\Oh{m^{1+\oh{1}}}$.
When we apply reductions, we have to perform two basic modifications of an object: edge removal and red-count increase.
Now, we discuss how these basic modifications influence the flow network.
When an edge $e$ is removed from the graph, we have to remove the node $e$ from the flow network and the flow decreases by $2$.
This is easily done in constant time.
When we increase the red-count of a vertex $w$, we have to decrease by $2$ the capacity of the link connecting the node $w$ with the sink node $t$, and the maintained flow may become infeasible.
In this case, we first restore feasibility by cancelling at most $2$ units of flow along flow paths containing this link.
As the red-count of a vertex is increased only in a reduction of Type~\ref{red:hard}, \cref{lem:friendly_reductions} gives that the resulting network still admits a flow saturating all links leaving the source, and hence the maximum flow is higher than the value of the maintained flow by at most $2$ units.
Thus, we can find a new maximum flow by performing at most two flow augmentations in time $\Oh{m}$.
As we perform at most $m$ edge removals and at most $m$ red-count increases, the total time spent on maintaining the flow is $\Oh{m^2}$.

It remains to show how to use the maintained flow to perform a single check for a safe neighbor in time $\Oh{m}$.
Let $(H_i,\redc_i)$ be the current object with $m_i$ edges.
We maintain a flow of size $2\cdot m_i$ in $\mathcal{N}_{k,H_i,\redc_i}$.
Let $v$ be a vertex in $H_i$ and $w$ be a neighbor of $v$ in $H_i$.
Let $(H',\redc')$ be the object obtained from $(H_i,\redc_i)$ by removing edge $\set{v,w}$ and increasing the red-count of $w$ by one.
We already know that $w$ is a safe neighbor of $v$ if and only if $(H',\redc')$ is a positive object with $m' = m_i - 1$ edges.
Thus, by \cref{obs:algo_positive2}, it is enough to check if $\mathcal{N}^{[x]}_{k,H',\redc'}$ admits a flow of size $2m'$ for every vertex~$x$ in $H'$.
As $(H_i,\redc_i)$ is a positive object, we know that if some subset $U$ of vertices has potential lower than $2$ in $(H',\redc')$, then $\pot_{H',\redc'}(U) = 0$ and $w \in U$.
Thus, we only need to check if $\mathcal{N}^{[w]}_{k,H',\redc'}$ admits a flow of size $2m'$.
Notice that the flow network $\mathcal{N}^{[w]}_{k,H',\redc'}$ together with a maximum flow can be easily constructed in time $\Oh{m}$ by making a copy of the flow network $\mathcal{N}_{k,H_i,\redc_i}$, removing a single node, decreasing the capacity of a single link by $4$ and performing a constant number of flow augmentations.
As the algorithm performs $\Oh{m^2}$ checks, the total running time is $\Oh{m^3}$.
\end{proof}

Equipped with \cref{lem:algo_decomposition}, we are now ready to recall and prove \cref{lem:algorithm}.

\lemalgorithm*
\begin{proof}
	Let $G$ be a $k$\nobreakdash-arboric graph with $m$ edges. 
	Recall that every $(d,h-1)$\nobreakdash-decomposition and every $(d-1,h)$\nobreakdash-decomposition is also a $(d,h)$\nobreakdash-decomposition, and therefore, it is enough to prove the assertion for $d+h=2k-1$. 
	By \cref{lem:algo_decomposition}, the algorithm constructs a $(k,k-1)$\nobreakdash-decomposition of $G$ in time $\Oh{m^3}$. Next, as in the proof of \cref{thm:super}, the algorithm uses $k-1-h$ applications of \cref{lem:super} to convert it into a $(d,h)$\nobreakdash-decomposition. Since the procedure of \cref{lem:super} runs in time $\Oh{m^{1+\oh{1}}}$, the total running time is $\Oh{m^3}$.
\end{proof}

\section{Hardness}\label{sec:hardness}

Lima, Rautenbach, Souza and Szwarcfiter~\cite{LimaRSS2017} show that \DPdec{1}{1} is \NP-complete.
They use a very robust reduction from \DPham.
In particular, the completeness holds even when the input graph is 
$2$-connected bipartite cubic planar simple graph.
Using similar ideas, for all positive integers $d$ and $h$, we show that \DPdec{d}{h} is \NP-complete.

\thmhard*
\begin{proof}
    Notice that checking if a given graph is $d$\nobreakdash-degenerate, or $h$\nobreakdash-bounded-degree can be done in linear time.
    Thus, \DPdec{d}{h} is in \NP.
    The hardness proof goes by induction on $d$.
    For the base case, we present, for any positive integer $h$, a reduction from the following variant of \DPham to \DPdec{1}{h}.
\defdecproblem
{\DPfixham{r}}
{An $r$-regular graph $G=(V,E)$ and an edge $e$ in $E$}
{Is there a Hamiltonian Cycle in $G$ that includes $e$?}
    It follows from the original works~\cite{GareyJT1976,GareyJS1976} on the complexity of \DPham that \DPfixham{r} is \NP-complete for every $r \ge 3$.
    Now, we fix $r=h+2$, and show a reduction to \DPdec{1}{h}.
    Assume we are given instance $\langle G, e \rangle$ and let $u$ and $v$ be the two vertices in $e$.
    Let $G'$ be a graph obtained from $G$ by removing edge $e$.
    In graph $G'$, vertices $u$ and $v$ are of degree $r-1=h+1$ each.
    Every other vertex in $G'$ is of degree $r=h+2$.
    We claim that $\langle G, e \rangle$ is a \yes-instance if and only if $G'$ admits a $(1,h)$\nobreakdash-decomposition.
    See \cref{fig:hardness_base}, that illustrates the reduction for $h=1$.

    \begin{figure}[h]
        \centering
        \begin{tikzpicture}[scale=0.8]
            \node[draw, circle] (u1) at (0:4) {$u$};
            \node[draw, circle] (u2) at (30:4) {$v$};
            \node[draw, circle] (u3) at (60:4) {\phantom{$u$}};
            \node[draw, circle] (u4) at (90:4) {\phantom{$u$}};
            \node[draw, circle] (u5) at (120:4) {\phantom{$u$}};
            \node[draw, circle] (u6) at (150:4) {\phantom{$u$}};
            \node[draw, circle] (v6) at (180:4) {\phantom{$u$}};
            \node[draw, circle] (v5) at (210:4) {\phantom{$u$}};
            \node[draw, circle] (v4) at (240:4) {\phantom{$u$}};
            \node[draw, circle] (v3) at (270:4) {\phantom{$u$}};
            \node[draw, circle] (v2) at (300:4) {\phantom{$u$}};
            \node[draw, circle] (v1) at (330:4) {\phantom{$u$}};
            \draw[dotted, thick] (u2) -- (u1);
            \draw[blue, thick] (u2) -- (u3) -- (u4) -- (u5) -- (u6) -- (v6) -- (v5) -- (v4) -- (v3) -- (v2) -- (v1) -- (u1);
            \draw[red, thick] (u1) -- (u6);
            \draw[red, thick] (v1) -- (v6);
            \draw[red, thick] (u2) -- (v2);
            \draw[red, thick] (u4) -- (v3);
            \draw[red, thick] (u3) -- (v4);
            \draw[red, thick] (u5) -- (v5);
        \end{tikzpicture}
        \caption{The reduction from \DPham to \DPdec{1}{1}. A cubic graph $G$ has a Hamiltonian cycle that includes edge $e=\set{u,v}$ if and only if the graph $G'$ obtained by removing $e$ admits a $(1,1)$\nobreakdash-decomposition, i.e.\ decomposition into a forest and a matching.
        Blue edges span a forest, red edges span a matching, and the dotted edge is the removed edge $e$.
        }

        \label{fig:hardness_base}
    \end{figure}

    For the completeness of the reduction, see that if there is a Hamiltonian cycle in $G$ that includes~$e$, then the other edges of the cycle span a Hamiltonian $u$-$v$ path which is $1$\nobreakdash-degenerate, and the edges not in the cycle span an $h$\nobreakdash-regular graph.

    For the soundness of the reduction, let $E_1$, $E_2$ be a $(1,h)$\nobreakdash-decomposition of $G'$.
    We color the edges in $E_1$ blue and the edges in $E_2$ red.
    Since every vertex other than $u$ and $v$ has degree $h+2$ in $G'$ and has at most $h$ incident red edges, it has blue degree at least $2$.
    Similarly, $u$ and $v$ have blue degree at least $1$.
    Thus, there are no isolated vertices in the blue forest, and there are only two leaves in the blue forest.
    Thus, the blue forest spanned by $E_1$ is a Hamiltonian $u$-$v$ path.
    Adding edge $e$ to $E_1$ gives a Hamiltonian cycle in $G$.

    For every positive integer $h$, we have that \DPdec{1}{h} is \NP-complete.
    Now, for the inductive step, let us consider any positive integers $d$ and $h$ and show a reduction from \DPdec{d}{h} to \DPdec{d+1}{h}.
    For an instance $\langle G \rangle$ of \DPdec{d}{h}, we construct a graph $G'$ by adding an apex vertex $a$ to $d+h+2$ disjoint copies $G_1, G_2, \ldots, G_{d+h+2}$ of the graph $G$, see \cref{fig:hardness_step}.
    We claim that $\langle G \rangle$ is a \yes-instance
    if and only if 
    $G'$ admits a $(d+1,h)$-decomposition.

    \begin{figure}[h]
        \centering
        \begin{tikzpicture}[scale=0.8]
            \node[draw, circle] (a) at (-2,0) {$a$};
            \node[draw, rectangle] (G1) at (0,0) {$G_1$};
            \node[draw, rectangle] (G2) at (2,0) {$G_2$};
            \node[rectangle] (G3) at (4,0) {$\ldots$};
            \node[draw, rectangle] (G4) at (6,0) {$G_{d+h+1}$};
            \node[draw, rectangle] (G5) at (9,0) {$G_{d+h+2}$};
            \draw (a.north east) edge[blue, bend left=45] (G1.north west);
            \draw (a.north east) edge[blue, bend left=45] (G1.north);
            \draw (a.north east) edge[blue, bend left=45] (G1.north east);
            \draw (a.north east) edge[blue, bend left=45] (G2.north west);
            \draw (a.north east) edge[blue, bend left=45] (G2.north);
            \draw (a.north east) edge[blue, bend left=45] (G2.north east);
            \draw (a.north east) edge[blue, bend left=45] (G4.north west);
            \draw (a.north east) edge[blue, bend left=45] (G4.north);
            \draw (a.north east) edge[blue, bend left=45] (G4.north east);
            \draw (a.north east) edge[blue, bend left=45] (G5.north west);
            \draw (a.north east) edge[blue, bend left=45] (G5.north);
            \draw (a.north east) edge[blue, bend left=45] (G5.north east);
        \end{tikzpicture}
        \caption{The reduction from \DPdec{d}{h} to \DPdec{d+1}{h}. 
        Putting the apex vertex $a$ first in the order and coloring all edges incident to $a$ blue, we obtain a $(d+1,h)$\nobreakdash-decomposition from combined $(d,h)$\nobreakdash-decompositions of the copies of $G$.
        \label{fig:hardness_step}
        }
    \end{figure}

    For the completeness of the reduction, see that if there is a $(d,h)$-decomposition of $G$, then one can construct a $(d+1,h)$\nobreakdash-decomposition of $G'$ by joining decompositions of the copies of $G$, and adding all the edges incident to the apex vertex $a$ to the $d$\nobreakdash-degenerate graph.
    The resulting graph is $(d+1)$-degenerate, as the vertex $a$ can be easily added on the first position in a $(d+1)$\nobreakdash-degenerate order.
    
    For the soundness of the reduction, let $E_1$, $E_2$ be a $(d+1,h)$-decomposition of $G'$.
    We color the edges in $E_1$ blue and the edges in $E_2$ red.
    Let $\ll$ be a $(d+1)$-degenerate order of the blue graph.
    As the apex vertex $a$ has at most $h$ incident red edges, at most $h$ copies of $G$ contain an endpoint of a red edge incident with the apex vertex $a$.
    Similarly, there are at most $d+1$ blue edges incident to $a$ that join $a$ to vertices that are earlier in $\ll$. 
    As there are $d+h+2$ disjoint copies of $G$ in $G'$, at least one copy of $G$ has all vertices after $a$ in $\ll$ and no red edge joining this copy with $a$.
    Let $G_i$ be such a copy of $G$ for some $i$ in $\{1,\ldots,d+h+2\}$.
    Trivially, each vertex in $G_i$ has at most $h$ incident red edges, as it has at most $h$ incident red edges in $G'$.
    Further, each vertex $v$ in $G_i$ has at most $d+1$ incident blue edges to vertices that are earlier in $\ll$.
    As $a$ is earlier than $v$ in $\ll$, and the edge $\set{a,v}$ is blue, we get that $\ll$ restricted to vertices of $G_i$ is a $d$\nobreakdash-degenerate order for blue edges.
    Therefore, the restriction of the decomposition to $G_i$ is a $(d,h)$\nobreakdash-decomposition of $G$.
\end{proof}

As shown in~\cite{LimaRSS2017}, \DPdec{1}{1} is \NP-complete even for bipartite planar graphs.
In particular, it is \NP-complete for $2$-arboric graphs.
This fact combined with the reduction used in the proof of \cref{thm:hard} guarantees that \DPdec{2}{1} is \NP-complete even for $3$-arboric graphs.
On the other hand, \cref{thm:super} guarantees that every $2$-arboric graph $G$ admits a $(2,1)$-decomposition.
We conjecture that there are sharp complexity thresholds for \DPdec{k}{k-1} -- the problems are in \cP{} for $k$-arboric graphs, and \NP-complete for $(k+1)$-arboric graphs or even for smaller classes of graphs that properly contain $k$-arboric graphs.

\section*{Use of AI} 
The mathematical content of this article was developed entirely by the authors, without the use of artificial intelligence tools.

\section*{Acknowledgements}
	Bart{\l}omiej Bosek, Grzegorz Gutowski and Jakub Przyby{\l}o have discussed and attempted to solve the problem for $k=2$ with many researchers including Marcin Anholcer, Jarosław Grytczuk, Rafał Pyzik, Oriol Serra, Lluis Vena Cros, and Mariusz Zając.
	\newline Micha{\l} Laso\'n thanks Green Caffè Nero and A4 Krak\'{o}w-Wroc{\l}aw for the inspiring environment.

\bibliography{paper}
\end{document}